\documentclass[12pt]{article}
\usepackage{adjustbox}
\usepackage{amsmath} 
\usepackage{amsthm} 
\usepackage{amssymb} 
\usepackage[dvipsnames]{xcolor} 
\usepackage{mathtools} 
\usepackage{mma}
\usepackage{tikz,tikz-cd}
\usepackage[scr=euler]{mathalfa} 
\usepackage[backref=page,linktocpage]{hyperref} 
\usepackage{graphics,graphicx} 
\usepackage{cleveref}
\usepackage{enumerate} 
\usepackage{float}
\usepackage{setspace}
\usepackage{epic}
\usepackage[latin2]{inputenc}
\usepackage[T1]{fontenc}
\usepackage{hyperref}
\usepackage{enumerate}
\usepackage{caption}
\usetikzlibrary{calc}
\usetikzlibrary{positioning}
\usetikzlibrary{arrows}
\usetikzlibrary{calc,positioning,arrows,decorations.pathreplacing}

\DeclareMathAlphabet{\mathsf}{OT1}{\sfdefault}{m}{n} 
\usepackage[margin=1.20in]{geometry} 

\hypersetup{ 
	colorlinks = true,
	linkbordercolor = {white},
	linkcolor = {BrickRed},
	anchorcolor = {black},
	citecolor = {BrickRed},
	filecolor = {cyan},
	menucolor = {BrickRed},
	runcolor = {cyan},
	urlcolor = {black}
}

\newtheoremstyle{teoremas} 
{11pt}
{11pt}
{\itshape}
{}
{\bfseries}
{}
{.5em}
{}

\theoremstyle{teoremas} 
\newtheorem{theorem}{Theorem}[section] 
\newtheorem{lemma}[theorem]{Lemma} 
\newtheorem{proposition}[theorem]{Proposition} 

\newtheoremstyle{definition} 
{11pt}
{11pt}
{}
{}
{\bfseries}
{}
{.5em}
{}

\theoremstyle{definition} 

\crefname{theorem}{theorem}{theorems} 
\Crefname{theorem}{Theorem}{Theorems} 
\crefname{lemma}{lemma}{lemmas} 
\Crefname{lemma}{Lemma}{Lemmas} 
\crefname{proposition}{proposition}{propositions} 
\Crefname{proposition}{Proposition}{Propositions} 

\DeclareMathOperator{\rk}{rk}
\newcommand{\s}{\mathcal{S}}

\newcommand{\M}{\mathsf{M}}

\makeatletter
\def\dual#1{\expandafter\dual@aux#1\@nil}
\def\dual@aux#1/#2\@nil{\begin{tabular}{@{}c@{}}#1\\#2\end{tabular}}
\makeatother

\begin{document}

\begin{center}
{\large \bf Inverse Kazhdan--Lusztig polynomials of fan matroids}
\end{center}

\begin{center}
 Alice L.L. Gao$^{1}$ , Ya-Xing Li$^{2}$, and Yun Li$^{3}$ \\[6pt]

 $^{1,2,3}$School of Mathematics and Statistics,\\
 Northwestern Polytechnical University, Xi'an, Shaanxi 710072, P.R. China
 
$^{1,2,3}$Shenzhen Research Institute of Northwestern Polytechnical University,\\
Sanhang Science \& Technology Building, No. 45th, Gaoxin South 9th Road, Nanshan District,
Shenzhen City, 518057, P.R. China

Email: $^{1}${\tt llgao@nwpu.edu.cn},
       $^{2}${\tt liya@mail.nwpu.edu.cn},
       $^{3}${\tt liyun091402@163.com}
\end{center}

\noindent\textbf{Abstract.}
The inverse Kazhdan--Lusztig polynomial of a matroid was introduced by Gao and Xie, and the inverse $Z$-polynomial of a matroid was introduced by Ferroni, Matherne, Stevens, and Vecchi.
In this paper, we study these two polynomials for fan matroids, a family of graphic matroids associated with fan graphs. We first derive the generating functions for the inverse Kazhdan--Lusztig polynomials of fan matroids using their recursive definition, 
and then deduce the explicit formulas of these polynomials therefrom. For the inverse $Z$-polynomials of fan matroids, we obtain their generating functions using a parallel generating function approach, and further derive their explicit expansions based on these generating functions.
Additionally, we provide alternative proofs for the above generating functions using the deletion formulas for inverse Kazhdan--Lusztig and inverse $Z$-polynomials. As an application of the explicit formula for inverse Kazhdan--Lusztig polynomials, we prove that the coefficients of the inverse Kazhdan--Lusztig polynomial of the fan matroid  form a log-concave sequence with no internal zeros.

\noindent \emph{AMS Classification 2020:} 
05B35, 05A15, 05E05.

\noindent \emph{Keywords:} 
Inverse Kazhdan--Lusztig polynomials, inverse $Z$-polynomials, fan matroids, generating functions, log-concavity.

\noindent \emph{Corresponding Author:} Yun Li, liyun091402@163.com

\section{Introduction}

The Kazhdan--Lusztig polynomial of a matroid was first  introduced  by Elias, Proudfoot,
and Wakefield \cite{elias2016kazhdan}.  Building on Kazhdan--Lusztig--Stanley theory for locally finite posets, Gao and Xie \cite{alice2020inverseKL} defined the inverse Kazhdan--Lusztig polynomial $Q_{\M}(t)$ for arbitrary matroid $\M$, 
and conjectured that the coefficients of $Q_{\M}(t)$ are nonnegative.
Braden, Huh, Matherne, Proudfoot, and Wang~\cite{braden2020singular} showed that the coefficient of $t^i$ in $Q_{\M}(t)$ equals the multiplicity of the trivial
module in the degree $\rk(\M)-2i$ piece of the Rouquier complex of $\M$. This cohomological interpretation implies the nonnegativity of the coefficients of $Q_{\M}(t)$ for  all matroids.
Ardila and Sanchez \cite{ardila2023valuations}  proved that
$Q_{\M}(t)$ is
a valuative invariant.
Beyond these  results, explicit formulas for
$Q_{\M}(t)$ have been  derived for several important classes of
matroids.
Ferroni, Nasr, and Vecchi~\cite{ferroni2023stressed} obtained explicit  formulas for the inverse Kazhdan--Lusztig polynomials of paving matroids, and Ferroni and Schr\"{o}ter~\cite{ferroni2024valuative} later extended these results to elementary split matroids, a class that includes paving and uniform matroids.

Proudfoot, Xu, and Young \cite{PXY_2018ELC} introduced the $Z$-polynomial  of a matroid $\M$, which Braden, Huh, Matherne, Proudfoot, and Wang \cite{braden2020singular} showed to coincide with the Hilbert series of the intersection cohomology module of $\M$.
Ferroni, Matherne, Stevens, and Vecchi \cite{ferroni2024hilbert} later defined the inverse $Z$-polynomial $Y_{\M}(t)$  for arbitrary  matroid $\M$. Gao, Ruan, and Xie \cite{gao2025inverse} further studied its fundamental  properties.
Braden, Ferroni, Matherne, and Nepal \cite{braden2025deletion} derived a deletion formula for both the
invariants $Q_{\M}(t)$ and $Y_{\M}(t)$.
The main objective of this paper is to compute the inverse Kazhdan--Lusztig polynomials and the inverse $Z$-polynomials of
fan matroids, a class of graphic matroids associated with fan graphs. 
We also study the log-concavity of the inverse Kazhdan--Lusztig polynomials for such  matroids.

Let us first recall basic notions and notation for graphic matroids, mainly 
following Lu, Xie, and Yang~\cite{xie2018}.
Let $G$ be a loopless graph with vertex set $V(G)$ and edge set $E(G)$,
and let $\M(G)$ be its associated graphic matroid.
The ground set of $\M(G)$ is $E(G)$,  with independent sets precisely the forests of $G$.
For any subset $A\subseteq E(G)$, let $G[A]$ be the subgraph of $G$ with edge set $A$ and vertex set consisting of all endpoints of edges in $A$.
The rank of $A$ is  given by
$$
\rk(A)  = |V(G[A])| - c(G[A]),
$$
where $c(G[A])$  denotes the number of connected components of $G[A]$.
The rank of the graphic matroid $\M(G)$ is the rank of the edge set $E(G)$, i.e., $|V(G)|$ minus the number of connected components of $G$. We denote this rank by $\rk(G)$.
A flat of $\M(G)$ is either $E(G)$
or a proper subset $F\subseteq E(G)$ whose rank increases strictly
when any $e\in E(G)\setminus F$ is added.
Flats of $\M(G)$ correspond bijectively to partitions of $V(G)$
into vertex sets of connected induced subgraphs,
called compositions of $G$; see \cite{knopfmacher2001graph}.
We denote by $\mathcal{C}(G)$ the set of all compositions of $G$,
and for $C\in\mathcal{C}(G)$, let $|C|$ be the number of parts of $C$.
For a flat $F$ of $\M(G)$,
the restriction and contraction of $\M(G)$ to $F$
are naturally identified with graphic matroids via
\begin{align*}
\M(G)|_F = \M(G[F]), \qquad \M(G)/F = \M(G/F),
\end{align*}
where $G/F$ denotes the graph obtained by contracting all edges in $F$ (in any order).
If a composition $C \in \mathcal{C}(G)$ corresponds to a flat $F$,
we also write $G[C]$ for $G[F]$ and $G/C$ for $G/F$; see \cite[p. 61 and p. 63]{welsh1976matroid}.

We now introduce the inverse Kazhdan--Lusztig polynomial for graphic matroids. 
For notational convenience, we write $Q_G(t)$ instead of $Q_{\M(G)}(t)$ and refer to it as the inverse Kazhdan--Lusztig polynomial of a given graph $G$.
This polynomial is uniquely determined by the following conditions
\begin{itemize}
    \item If $\rk(G) = 0$, i.e., $E(G) = \emptyset$, then $Q_G(t) = 1$.
    \item If $\rk(G)> 0$, then $\deg Q_G(t) < \frac{1}{2}\rk(G)$, and
    \begin{align}\label{ikl-reform}
    (-t)^{\rk(G)}Q_G(t^{-1}) = \sum_{C\in \mathcal{C}(G)} (-1)^{\rk(G[C])} Q_{G[C]}(t) \cdot t^{|C|} \chi_{G/C}(t^{-1}),
\end{align}
where $\chi_G(t)$ is the chromatic polynomial of $G$.
\end{itemize}
We note that the original definition of the inverse Kazhdan--Lusztig
polynomial is formulated in terms of the characteristic polynomial
$\chi_{\M(G)}(t)$ of the matroid $\M(G)$. Since
\begin{align}\label{chim-chi}
\chi_{\M(G)}(t)=t^{-c(G)}\chi_G(t),
\end{align}
where $c(G)$ is the number of connected components of $G$
\cite[p.~262]{welsh1976matroid}, we may equivalently work with the
chromatic polynomial $\chi_G(t)$.
Besides,
if $G$ has parallel edges,
let $\mathrm{si}(G)$ denote the underlying simple graph of $G$ obtained by deleting all loops, if any, and replacing each parallel class by a single edge. Since $\mathcal{C}(G)=\mathcal{C}(\mathrm{si}(G))$, the lattices of flats of $\M(G)$ and $\M(\mathrm{si}(G))$ are isomorphic. Consequently, $Q_G(t)=Q_{\mathrm{si}(G)}(t)$.

A fan graph $F_n$ ($n\geq 1$) is  a graph on $n+1$ vertices, which is obtained by connecting a distinguished vertex to every vertex of a path with $n$ vertices. 
We call $\M(F_n)$
the fan matroid. 
Our first main result provides
an explicit  formula for the inverse Kazhdan--Lusztig polynomial  of the fan matroid.

\begin{theorem}\label{thm-fan}
For any fan matroid $F_n$ with $n\geq1$, 
\begin{align}\label{equ-thm-fan}
Q_{F_{n}}(t)= \sum_{k=0}^{\lfloor\frac{n-1}{2}\rfloor}\frac{(n-2k)2^{n-2k-1}}{n}\binom{n}{k}t^{k}.
\end{align}
\end{theorem}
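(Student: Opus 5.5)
The plan is to compute $Q_{F_n}(t)$ through generating functions, using the defining recursion \eqref{ikl-reform}, and then read off the coefficients by Lagrange inversion or a direct series expansion.

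\textbf{Step 1: classify compositions.} Let the apex be $v_0$ and the path be $v_1,\dots,v_n$. In a composition $C$ of $F_n$, the block containing $v_0$ consists of $v_0$ together with an arbitrary subset $S\subseteq\{v_1,\dots,v_n\}$; it is always connected. Every other block is a set of consecutive path vertices, and these blocks tile the complement of $S$ in the path, that is, they tile each maximal run of $\{v_1,\dots,v_n\}\setminus S$.

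The pieces of the recursion then look as follows.
\begin{itemize}
\item The restriction $G[C]$ is a disjoint union of a fan $F_{|S|}$ and several paths. Here the fan's path may be disconnected, but its flats depend only on the induced subgraph, and $Q$ is multiplicative over disjoint unions. An induced subgraph on $v_0\cup S$ is a ``generalized fan'': $v_0$ joined to a disjoint union of paths of lengths $a_1,\dots,a_r$. Its graph is a one-point gluing of fans, so its matroid is the direct sum $\bigoplus_i \M(F_{a_i})$. Paths are trees, with $Q=1$ after their free matroids contribute trivially.
\item The contraction $G/C$, after simplification, is again a generalized fan. The apex is the block of $v_0$, and the remaining blocks form paths, one for each run between consecutive elements of $S$. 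Its chromatic polynomial is $t\prod_j (t-2)^{m_j-1}(t-1)$ per run, and can be written explicitly.
\end{itemize}

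\textbf{Step 2: the generating function.} Set $\mathcal{Q}(x)=\sum_{n\ge0}Q_{F_n}(t)x^n$. The recursion factorizes as a sequence of alternating segments: runs of vertices in $S$, which contribute $F_{a}$ factors, and runs outside $S$, which contribute a sum over compositions of a path of length $m$. Using multiplicativity, the right-hand side of \eqref{ikl-reform} becomes a transfer-matrix or ``sequence'' expression in $\mathcal{Q}(x)$ and simple rational functions of $x$ and $t$. The left-hand side $(-t)^{n}Q_{F_n}(t^{-1})$ corresponds to $\mathcal{Q}$ with $x\mapsto -tx$ and $t\mapsto t^{-1}$.

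This is awkward as a functional equation. Instead I would guess the answer and verify it. The claimed coefficients $\frac{n-2k}{n}\binom nk 2^{n-2k-1}$ are exactly the coefficients of the Chebyshev/Lucas-type expansion
\[
\sum_k \tfrac{n}{n-k}\binom{n-k}{k}\cdots,
\]
or equivalently the truncation below degree $n/2$ of $\tfrac12[(1+\sqrt{1+4t}\,)^n\ldots]$-type expressions. Concretely, $\sum_k \frac{n-2k}{n}\binom nk u^k$ is the ``ballot'' part of $(1+u)^n$. Its generating function over $n$ is expressible via $C(x)$, the Catalan-type root of $y=x(1+ty^2)$ after scaling by $2$. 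So the explicit guess is
\[
\sum_{n\ge1}Q_{F_n}x^n=\log\text{-derivative form } \frac{\alpha(x)}{1-\alpha(x)}, \qquad \alpha=2x\,c(tx^2),
\]
with $c$ the Catalan generating function.

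\textbf{Step 3: verification and extraction.} Substitute the guessed $\mathcal{Q}$ into the sequence expression and check the identity of rational functions in $x$, $t$ and $\sqrt{1-4t x^2}$. The degree condition $\deg<n/2$ holds automatically from the upper limit $\lfloor (n-1)/2\rfloor$. Since \eqref{ikl-reform} together with the degree bound determines $Q$ uniquely, this proves the formula. Coefficient extraction then follows from Lagrange inversion: $[x^n]\,\alpha^j$ gives $\frac{j}{n}\binom{n}{k}2^{n-2k}$-type terms, and summing over $j$ produces the stated coefficient.

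\textbf{Main obstacle.} The hardest step is Step 2: handling the interplay between $S$-runs and non-$S$-runs, especially at the boundary segments at $v_1$ and $v_n$, and managing the mixed substitution $x\mapsto -tx$, $t\mapsto t^{-1}$ on the left-hand side. My first move is to compute $Q_{F_n}$ for $n\le5$ directly to pin down the guess, and then to verify the generating-function identity symbolically.
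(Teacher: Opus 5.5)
Your plan is correct in structure, and for the generating function it follows the paper's first proof. Your split into runs of $S$ (contributing $Q_{F_a}$) alternating with tiled runs of the complement (contributing through $\chi_{F_\ell}(t^{-1})$) is exactly the bijection $\phi\colon\mathcal{C}(F_n)\to\mathcal{C}'_n$ with its alternating $\mathcal{A}^o/\mathcal{A}^e$ structures. The paper likewise checks a closed form against $\Psi(t^{-1},-tu)=\Phi(-u)$ and then invokes uniqueness together with the degree bound.

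Guessing does not let you skip Step~2, because Step~3 needs the explicit sequence expression. In the paper this is $\Phi=(t+\Phi^o)(t^{-1}+\Phi^e)/(1-\Phi^e\Phi^o)$. The boundary runs are absorbed by allowing an empty first fan run (weight $t$) and an empty last complementary run (weight $t^{-1}$). The check is then purely algebraic, since $\sqrt{1-4tu^2}$ is invariant under $t\mapsto t^{-1}$, $u\mapsto -tu$. Section~\ref{dele-section} shows how to bypass this step entirely: the deletion formula gives $Q_{F_n}=(t+2)Q_{F_{n-1}}-\sum_j C_jt^{j+1}Q_{F_{n-2j-2}}$ directly.

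Your guessed series is off by a factor of $2$. With $\alpha=2x\,c(tx^2)$, the series $\alpha/(1-\alpha)$ has $x$-coefficient $2\ne Q_{F_1}(t)=1$, and your Lagrange step would then return twice the stated coefficients. The correct series is
\[
\sum_{n\ge1}Q_{F_n}(t)x^n=\frac{\beta}{1-2\beta}=\sum_{j\ge1}2^{j-1}\beta^j,\qquad \beta=x\,c(tx^2),\quad \beta=x(1+t\beta^2).
\]
Using $1/c(y)=1-y\,c(y)$, one checks that this equals the paper's $2x/(1-4x+\sqrt{1-4tx^2})$.

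With that fix, your extraction is genuinely different from the paper's, and shorter. Lagrange inversion gives $[x^n]\beta^j=\frac{j}{n}\binom{n}{(n-j)/2}t^{(n-j)/2}$. So only $j=n-2k$ contributes to $t^k$, which yields $\frac{(n-2k)2^{n-2k-1}}{n}\binom{n}{k}$ at once. The constraint $j\ge1$ forces $k\le\lfloor (n-1)/2\rfloor$, so the degree bound comes for free.

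The paper instead takes four steps:
\begin{enumerate}
\item it derives a first-order linear ODE for $\Psi$;
\item it converts the ODE into the four-term P-recurrence of Proposition~\ref{prop-recu-fan};
\item it verifies that the claimed array satisfies this recurrence via a five-case analysis of $R(n,k)$;
\item it proves the degree bound by a separate induction on a Catalan-type recurrence.
\end{enumerate}
Your route explains why ballot numbers appear. The paper's is fully mechanical and yields a recurrence as a by-product, but it is less transparent.

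Finally, drop the ``Lucas-type'' remark: $\frac{n}{n-k}\binom{n-k}{k}$ is a different sequence from $\frac{n-2k}{n}\binom{n}{k}$.
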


Recall that a polynomial $f(t)=\sum_{i=0}^{n}a_{i}t^{i}$
with real coefficients is  called log-concave if
$$a_{i}^{2} \ge a_{i-1}a_{i+1} \qquad\text{~for~all~} 1 \le i \le n-1,$$
and it is said to have no internal zeros if there do not exist
indices $0 \le i < j < k \le n$ with $a_i, a_k \neq 0$ and $a_j=0$. 
Motivated by the log-concavity conjecture for matroid Kazhdan--Lusztig polynomials
due to Elias, Proudfoot, and Wakefield~\cite{elias2016kazhdan},
Gao and Xie~\cite{alice2020inverseKL} proposed an analogous conjecture
for the inverse Kazhdan--Lusztig polynomial $Q_\M(t)$.
They conjectured that, for every matroid $\M$, the coefficients of $Q_\M(t)$
form a log-concave sequence with no internal zeros.
This conjecture has been verified for several well-studied  classes of matroids:
Gao and Xie~\cite{alice2020inverseKL} for uniform matroids,
Xie and Zhang~\cite{XieZhang2025Logconcavity} for paving matroids,
and Gao, Li, and Xie~\cite{gao2025equivariant} for thagomizer matroids.
In this paper, we extend these results to fan matroids.

\begin{theorem}\label{thm-fan-log-concave}
For any positive integer $n$, the coefficients of  $Q_{F_n}(t)$ form a log-concave sequence
with no internal zeros.  
\end{theorem}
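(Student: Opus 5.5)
We will work directly from the explicit formula in \Cref{thm-fan}. Write $a_k=\frac{(n-2k)2^{n-2k-1}}{n}\binom{n}{k}$ for $0\le k\le m:=\lfloor\frac{n-1}{2}\rfloor$, and $a_k=0$ otherwise. For $0\le k\le m$ we have $n-2k\ge 1$, so every $a_k$ is strictly positive. Hence the coefficient sequence is positive on the whole range $0,\dots,m$ and vanishes outside it, which gives the absence of internal zeros at once.

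For log-concavity, it suffices to check $a_k^2\ge a_{k-1}a_{k+1}$ for $1\le k\le m-1$. At $k=m$ the right-hand side is $0$, and for $m\le 1$ there is nothing to prove. In this range all three terms are positive, so we compare ratios. The common factor $1/n$ cancels, and the powers of $2$ contribute $2^{2(n-2k-1)}$ to the left-hand side and $2^{(n-2k+1)+(n-2k-3)}$ to the right-hand side; these are equal. The inequality therefore reduces to
\begin{align*}
(n-2k)^2\binom{n}{k}^2\ \ge\ (n-2k+2)(n-2k-2)\binom{n}{k-1}\binom{n}{k+1}.
\end{align*}

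We split this into two separate inequalities. The first is $(n-2k)^2\ge (n-2k)^2-4=(n-2k+2)(n-2k-2)$, which is trivial. The second is the classical log-concavity of binomial coefficients,
\begin{align*}
\binom{n}{k}^2\ge\binom{n}{k-1}\binom{n}{k+1},
\end{align*}
which holds because the ratio of the two sides equals $\frac{(k+1)(n-k+1)}{k(n-k)}>1$. All factors involved are nonnegative, since $n-2k-2\ge 0$ when $k\le m-1$. Multiplying the two inequalities therefore gives the desired inequality.

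There is no real obstacle here. The only care needed is in the boundary bookkeeping: the positivity of $n-2k-2$ on the range $k\le m-1$, the cases $k=m$, and small $n$ where the polynomial has one or two terms. The heavy lifting is the explicit formula of \Cref{thm-fan}, which we assume.
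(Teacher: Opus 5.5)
Your proof is correct and is essentially the paper's first proof. Positivity of the coefficients from the explicit formula in Theorem~\ref{thm-fan} gives no internal zeros, and the log-concavity inequality splits into $(n-2k)^2\ge (n-2k+2)(n-2k-2)$ times the log-concavity of $\binom{n}{k}$. The paper also gives a second proof, via real-rootedness of $\mathscr{B}(Q_{F_n}(t))$ and Newton's inequalities.
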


We now study the inverse $Z$-polynomials for fan matroids. 
For notational convenience, we also write $Y_G(t)$ instead of $Y_{\M(G)}(t)$ and refer to it as the inverse $Z$-polynomial of the graph $G$.
It is defined by
\begin{align}\label{invz-def}
    Y_{G}(t):=(-1)^{\rk(G)}\sum_{C\in \mathcal{C}(G)} (-1)^{\rk(G[C])} Q_{G[C]}(t) \cdot t^{\rk (G/C)} \mu_{G/C},
\end{align}
where $\mu_{G}$ denotes the M\"obius invariant of $\M(G)$. 
Since the definition of $Y_G(t)$ also depends only on the lattice of flats, it follows that  $Y_G(t)=Y_{\mathrm{si}(G)}(t)$.
For fan
matroids, we obtain the following explicit formula for their inverse $Z$-polynomials.

\begin{theorem} \label{thm-fan-invz}
For any fan matroid $F_n$ with $n\ge 1$, $Y_{F_n}(t)$ is a polynomial of degree $n$, and for each integer $k$ with $0\le k\le n$, the coefficient of $t^k$ is given by 
\begin{align}\label{thm-fan-invz-equation}
d_{n,k}=
2^{n-1}\binom{n}{k}+
\sum_{j=1}^{\lfloor n/2\rfloor}\sum_{r=1}^{j}2^{n-2j-1} \frac{(-1)^{r} r (2r+n-2j)}{(2j-r)(r+n-2j)}\binom{n-2j}{k-j}\binom{2j-r}{j}\binom{r+n-2j}{r}.
\end{align}
\end{theorem}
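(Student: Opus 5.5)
We will prove Theorem~\ref{thm-fan-invz} by a generating function argument running parallel to the one used for $Q_{F_n}(t)$. The first step is to classify the compositions $C\in\mathcal{C}(F_n)$. Let $v_0$ be the apex and $v_1,\dots,v_n$ the path. A composition has exactly one part containing $v_0$. That part induces a fan-like graph on $v_0$ together with some subset $S$ of path vertices. Every other part is a set of consecutive path vertices lying in a gap of $S$.

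For such a $C$ we identify the two graphs appearing in \eqref{invz-def}:
\begin{itemize}
\item $G[C]$ is a disjoint union of paths together with the graph on $v_0\cup S$. If $S$ splits into maximal runs of lengths $a_1,\dots,a_m$, the latter graph is a wedge at $v_0$ of fans $F_{a_1},\dots,F_{a_m}$.
\item $G/C$ is obtained by contracting each part to a vertex. After simplification it is again a fan. Its apex is the part containing $v_0$, and its path consists of the remaining parts in order, since consecutive parts are adjacent and every part not containing $v_0$ is adjacent to $v_0$.
\end{itemize}
Paths are trees, so they contribute $Q=1$. The inverse Kazhdan--Lusztig polynomial is multiplicative on wedges and direct sums, so $Q_{G[C]}=\prod_i Q_{F_{a_i}}$. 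Since $\mu$ depends only on the simple matroid, $\mu_{G/C}=\mu_{F_{|C|-1}}$. We will use the standard value $\mu_{F_m}=(-1)^m$-type data from $\chi_{F_m}(t)=t(t-1)(t-2)^{m-1}$. Indeed, $\chi_{\M(F_m)}(t)=(t-1)(t-2)^{m-1}$, so $|\mu_{F_m}|=2^{m-1}$ for $m\ge1$ and $\mu_{F_0}=1$. Substituting these into \eqref{invz-def} gives
\[
Y_{F_n}(t)=\sum_{C}\pm\prod_i Q_{F_{a_i}}(t)\,t^{|C|-1}\mu_{F_{|C|-1}}.
\]

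The second step is to encode this sum as a transfer or sequence construction and obtain a rational-type generating function
\[
\mathcal{Y}(x,t)=\sum_{n\ge0}Y_{F_n}(t)x^n.
\]
Reading the path left to right, it is a sequence of blocks. A run of $S$ of length $a$ contributes the weight $\pm Q_{F_a}(t)x^a$, which is how the generating function $\mathcal{Q}(x,t)=\sum_n Q_{F_n}(t)x^n$ from the earlier section enters. A non-apex path segment of length $\ell$ contributes $x^\ell$. Each non-apex part also carries a factor $t$ and a factor $2$ from $\mu$, except for boundary corrections. The signs $(-1)^{\rk(G[C])}$ are tracked through the exponents: a run of length $a$ has rank $a$, and a path part of length $\ell$ has rank $\ell-1$. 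Summing the geometric series gives $\mathcal{Y}$ as a rational function of $x$, $t$ and $\mathcal{Q}$.

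The earlier derivation of Theorem~\ref{thm-fan} presumably produced $\mathcal{Q}$ in closed form involving $\sqrt{1-4tx^2/(1-2x)^2}$-type expressions. The shape of \eqref{equ-thm-fan}, with Catalan-like coefficients $\frac{n-2k}{n}\binom nk$, suggests this. Substituting that closed form expresses $\mathcal{Y}$ explicitly.

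As a consistency check, setting $t$ so that contributions decouple should give the terms $2^{n-1}\binom nk$. This main term should come from the part of $\mathcal{Y}$ in which $Q$ is replaced by its constant term, namely $\frac{1}{1-2x(1+t)}$-type factors. The correction double sum, indexed by $j$ (the total degree shift from $Q$) and $r$, comes from the nonconstant part.

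The final step, which we expect to be the main obstacle, is coefficient extraction. We will write the nonconstant part as
\[
\sum_j (tx^2)^j (1-2x(1+t))^{-(n-2j)}\text{-like factors}
\]
times powers of a Catalan-type series. Lagrange inversion, or the known identity
\[
[u^j]\,C(u)^r=\frac{r}{2j-r}\binom{2j-r}{j},
\]
then produces the factor $\frac{r}{2j-r}\binom{2j-r}{j}$. Expanding the binomial in $x$ and $t$ gives $2^{n-2j-1}\binom{n-2j}{k-j}$ and $\binom{r+n-2j}{r}\frac{2r+n-2j}{r+n-2j}$. The last factor likely arises from combining two adjacent terms, $\binom{m+r}{r}+\binom{m+r-1}{r-1}$. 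Matching the signs $(-1)^r$ and the boundary cases $j=n/2$ will require care.

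The degree claim follows because the coefficient $d_{n,n}$ is nonzero. This is a known general fact: $Y_\M$ has degree $\rk \M$ with leading coefficient $1$ in suitable normalization. Alternatively, it can be checked directly from the formula.
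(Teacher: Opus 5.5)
\textbf{The first step fails.} Your overall plan is the same as the paper's: decompose each composition into apex runs and the blocks of path parts between them, turn this into a sequence construction, insert the closed form of $\Psi(t,u)$, and extract coefficients with a Catalan-power identity. Your description of $G[C]$ is also correct. But the identification of $G/C$ is false.

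$G/C$ is not a fan on $|C|-1$ path vertices. Two non-apex parts lying in different gaps of $S$ are separated by a run of $S$, whose vertices were merged into the apex part, so they are not adjacent in $G/C$. After simplification, $G/C$ is the wedge at the apex of fans $F_{\ell_1},\dots,F_{\ell_k}$, where $\ell_i$ is the number of parts in the $i$-th gap. Hence $\M(G/C)$ is a direct sum and
$\mu_{G/C}=\prod_{\ell_i\ge 1}(-1)^{\ell_i}2^{\ell_i-1}$, not $\mu_{F_{|C|-1}}=\pm 2^{|C|-2}$.

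The smallest failure is $n=3$ with $C=\{\{0,2\},\{1\},\{3\}\}$. Here $G/C$ is a path with two edges and $\mu_{G/C}=1$, whereas you use $\mu_{F_2}=2$. Your weights then give $4+9t+10t^2+4t^3$ instead of the true $Y_{F_3}(t)=4+9t+9t^2+4t^3$; your polynomial is not even palindromic.

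The correct bookkeeping loses one factor of $2$ per nonempty gap, not once globally. After the signs cancel, each gap block of $\ell\ge 1$ parts contributes $2^{\ell-1}t^{\ell}$. This gives $\tilde\Phi^e(u)=tu/(1-u-2tu)$ and, with $\Psi(t,u)$ from Theorem~\ref{thm-fan-pro}, the closed form \eqref{YFtu}. Your phrase ``except for boundary corrections'' would have to mean exactly this per-gap correction. As written, your displayed formula for $Y_{F_n}(t)$ is wrong.

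\textbf{The extraction step is only a heuristic.} What makes it work is that \eqref{YFtu} depends on $(t,u)$ only through $w=u(1+t)$ and $tu^2=zw^2$, with $z=t/(1+t)^2$. This is what produces $\binom{n-2j}{k-j}=[t^k]\,t^j(1+t)^{n-2j}$. The paper justifies the expansion in the basis $t^j(1+t)^{n-2j}$ by palindromicity of $Y_{F_n}$, but the shape of the generating function already gives it.

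Write $\sqrt{1-4zw^2}=1-2zw^2C(zw^2)$ and expand geometrically. The factors that appear are $(2w-1)^{-(r+1)}$, with exponent depending on $r$, not on $n-2j$. The Catalan identity you need is $[x^{j-r}]C(x)^r=\frac{r}{2j-r}\binom{2j-r}{j}$, not $[x^{j}]C(x)^r$.

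Your guess for the remaining factor is right:
$[w^i]\frac{w-1}{(2w-1)^{r+1}}=(-1)^r2^{i-1}\bigl(\binom{r+i}{r}+\binom{r+i-1}{r-1}\bigr)=(-1)^r2^{i-1}\frac{2r+i}{r+i}\binom{r+i}{r}$.

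Finally, the leading coefficient of $Y_{\M}$ is $|\mu_{\M}|$, which equals $2^{n-1}$ for $F_n$, not $1$. The degree claim follows from $d_{n,n}=2^{n-1}$.
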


This paper is organized as follows.
In Section~\ref{sec-thm1.1}, 
we first derive the generating function for the inverse Kazhdan--Lusztig polynomials of fan matroids via their recursive definition, and then deduce an explicit  formula
for these polynomials.
In Section \ref{In_Z_Fan},
we first derive the generating function for the inverse $Z$-polynomials of fan matroids using a parallel generating function approach, and further obtain their explicit expansion from the generating function.
In Section \ref{dele-section},
we provide alternative proofs of the above generating functions via the deletion formulas for inverse Kazhdan--Lusztig and inverse $Z$-polynomials.
In Section~\ref{sec-thm1.3}, we prove that the coefficients 
of $Q_{F_n}(t)$ form a log-concave sequence with no internal zeros.



\section{Inverse Kazhdan--Lusztig polynomials}\label{sec-thm1.1}

In this section we prove Theorem~\ref{thm-fan}, which gives an explicit
formula for the inverse Kazhdan--Lusztig polynomials of fan matroids.
Our approach follows the generating-function method of
Lu, Xie, and Yang~\cite{xie2018} for Kazhdan--Lusztig polynomials of fan
matroids.

\subsection{Generating functions}
Lu, Xie, and Yang~\cite{xie2018} computed the generating functions for the
Kazhdan--Lusztig polynomials of fan matroids $P_{F_n}(t)$.  We establish the analogous
generating function for the inverse Kazhdan--Lusztig polynomials
$Q_{F_n}(t)$.

Let
\begin{align}\label{fan-pro1}
    \Psi(t,u) := \sum_{n=0}^{\infty} Q_{F_n}(t) \, u^n,
\end{align}
where $F_0$ is the single-vertex graph. Our main result in this
subsection is as follows.

\begin{theorem}\label{thm-fan-pro}
We have
\begin{align}\label{fan-pro2}
    \Psi(t,u) = 1 + \frac{1 - 4u - \sqrt{1 - 4 t u^2}}{2(-2 + 4u + t u)}.
\end{align}
\end{theorem}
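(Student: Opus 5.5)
The plan is to turn the defining recursion \eqref{ikl-reform} for $G=F_n$ into a functional equation for $\Psi$. Then I will check that the right-hand side of \eqref{fan-pro2} is the unique power series that satisfies this equation together with the degree condition $\deg Q_{F_n}(t)<n/2$.

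\textbf{Step 1: compositions of $F_n$.} Write $v_0$ for the hub and $v_1,\dots,v_n$ for the path. A composition $C\in\mathcal{C}(F_n)$ is determined by two pieces of data.
\begin{itemize}
\item The part $B\ni v_0$, which equals $\{v_0\}\cup S$ for some $S\subseteq\{v_1,\dots,v_n\}$. Every set of this form is connected through the hub.
\item A partition of the remaining path vertices into intervals of consecutive vertices.
\end{itemize}
If $S$ splits into maximal runs of lengths $a_1,\dots,a_m$, then $F_n[B]$ is the one-point union at $v_0$ of fans $F_{a_1},\dots,F_{a_m}$. Its matroid is therefore the direct sum, so
\[
Q_{F_n[B]}=\prod_i Q_{F_{a_i}}, \qquad \rk = \sum_i a_i .
\]
Each other part is a path on $\ell$ vertices, which is a tree, so $Q=1$ and its rank is $\ell-1$.

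\textbf{Step 2: the contraction.} Contracting $B$ and each interval gives a graph in which $v_0$ is adjacent to every contracted interval (all path vertices are adjacent to the hub), and consecutive intervals in the same gap between runs of $S$ are adjacent. After simplification, $\mathrm{si}(F_n/C)$ is a one-point union at $v_0$ of fans $F_{b_j}$, one for each gap, where $b_j$ is the number of intervals in that gap. Using $\chi_{F_b}(t)=t(t-1)(t-2)^{b-1}$ and dividing by $t$ at each wedge point, we get
\[
\chi_{F_n/C}(t)=t\prod_j (t-1)(t-2)^{b_j-1}.
\]
In addition, $|C|=1+\sum_j b_j$.

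\textbf{Step 3: the functional equation.} The right-hand side of \eqref{ikl-reform} becomes a sum over words that alternate between hub runs (weighted by $Q_{F_a}(t)u^a$) and gaps. A gap is a composition of a segment of length $g$ into $b$ intervals, weighted by $(-1)^{g-b}$ and by the factor coming from $t^{b}\chi(t^{-1})$. The gap generating function is rational: intervals have weight $\sum_{\ell\ge1}(-1)^{\ell-1}u^\ell=u/(1+u)$, and the chromatic factors contribute $(1-t)(1-2t)^{b-1}$. The hub-run generating function is $\Psi(t,u)-1$. The words are sequences with optional gaps at both ends, and summing them as a geometric series yields
\[
\Psi\bigl(t^{-1},-tu\bigr)=R\bigl(t,u,\Psi(t,u)\bigr),
\]
where $R$ is an explicit rational function that is linear-fractional in $\Psi$. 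The left-hand side encodes $\sum_n (-t)^n Q_{F_n}(t^{-1})u^n$. I will double-check this against small cases: $Q_{F_1}=Q_{F_2}=1$, and $Q_{F_3}=1+3t$ from \eqref{equ-thm-fan}.

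\textbf{Step 4: uniqueness and verification.} The recursion together with the degree bound determines every $Q_{F_n}$ inductively. It therefore suffices to show that the candidate in \eqref{fan-pro2} has two properties.

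\emph{Degree property.} Its $u^n$-coefficient has degree $<n/2$ in $t$. This follows because $t$ enters only through $tu$ in the denominator and through $\sqrt{1-4tu^2}$, so the $t$-degree grows at most like half the $u$-degree. This needs to be checked with care for the strict inequality.

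\emph{Functional equation.} The key observation is that under $t\mapsto t^{-1}$, $u\mapsto -tu$ we have $tu^2\mapsto tu^2$, so the square root $\sqrt{1-4tu^2}$ is invariant. Both sides of the equation then lie in $\mathbb{Q}(t,u)[\sqrt{1-4tu^2}]$, and the identity reduces to comparing rational parts and $\sqrt{\cdot}$-parts. I would rationalize $1/(-2+4u+tu)$ to carry this out.

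\textbf{Main obstacle.} I expect the hardest part to be getting the bookkeeping in Step 3 exactly right: the signs $(-1)^{\rk}$, the end gaps versus interior gaps, and the degenerate case $S=\emptyset$, where $F_n/C$ is a single fan and there are no hub runs. I would resolve these by matching the first three or four coefficients before performing the final algebraic verification.
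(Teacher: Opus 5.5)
Your route is the paper's first proof. Your hub-part/gap decomposition is the bijection $\phi$ onto $\mathcal{C}'_n$. Your gap and hub-run series are the paper's $\Phi^e$ and $\Phi^o$, up to a factor $t^{\pm1}$ and the substitution $u\mapsto -u$. The uniqueness-plus-verification logic is the same, and your hand check via invariance of $\sqrt{1-4tu^2}$ is a fine substitute for the paper's computer check. However, two steps fail as written.

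\textbf{The sign in Step 3.} You dropped the sign of the hub part. Since $\rk(F_n[B])=\sum_i a_i$, a hub run of length $a$ must carry $(-1)^a$. With your interval weights $(-1)^{\ell-1}u^\ell$, the hub-run series is therefore $\Psi(t,-u)-1$, not $\Psi(t,u)-1$. With your weights the equation already fails at $n=1$: the one-block composition of $F_1$ contributes $-1$, the other contributes $1-t$, and $[u]\Psi(t^{-1},-tu)=-t$. The correct equation is
\begin{align*}
\Psi(t^{-1},-tu)=\frac{(1+H)(1+G)}{1-HG},\qquad H=\Psi(t,-u)-1,\quad G=\frac{(1-t)u}{1+2tu}.
\end{align*}
This is \eqref{eq-gf-klpol-fan-funceqn} with $\Phi^o(-u)=tH$ and $\Phi^e(-u)=G/t$. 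Since $u\mapsto -u$ also fixes $\sqrt{1-4tu^2}$, your comparison of rational and square-root parts then goes through.

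Your sanity values are also wrong. $F_2$ is a triangle, so $\M(F_2)\cong U_{2,3}$ and $Q_{F_2}=2$. Likewise \eqref{equ-thm-fan} gives $Q_{F_3}=4+t$, not $1+3t$. Matching coefficients against your values would send you looking for nonexistent errors.

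\textbf{The degree argument.} The factor $1/(-2+4u+tu)$ produces monomials $t^ku^k$. So the fact that $t$ enters through $tu$ gives no bound of half the $u$-degree; the bound holds only because of cancellation with the numerator.

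You can make this explicit by rationalizing. Since $(1-4u)^2-(1-4tu^2)=4u(-2+4u+tu)$, the candidate equals $1+2u/(1-4u+\sqrt{1-4tu^2})$. Expanding the square root with Catalan numbers, this is $1+u\,f(u,tu^2)$, where $f(u,v)=1/(1-2u-\sum_{i\ge1}C_{i-1}v^i)$ has integer coefficients. A monomial $u\cdot u^a(tu^2)^b$ has $u$-degree $n=1+a+2b$ and $t$-degree $b\le (n-1)/2$. This gives the strict bound $\deg<n/2$ directly, which is what the paper does via \eqref{eq-psicap-mult} and the recurrence \eqref{degree-1}.
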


Following \cite{xie2018}, we start from the recursive definition
\eqref{ikl-reform}.
After multiplying by $u^n$ and summing over $n$,
this recursion becomes a functional equation for the generating function
$\Psi(t,u)$.
We derive
this functional equation first.
Applying \eqref{ikl-reform} to  the fan graph $F_n$ gives
\begin{align*}
(-t)^{\rk(F_n)} Q_{F_n}(t^{-1})
= \sum_{C \in \mathcal{C}(F_n)}
(-1)^{\rk(F_n[C])} 
    Q_{F_n[C]}(t)  \cdot t^{|C|} 
    \chi_{F_n/C}(t^{-1}).
\end{align*}
Multiplying both sides by $u^n$ and  summing over all $n \ge 0$ yields
\begin{align}\label{eq:genfun-sum}
\sum_{n=0}^{\infty} \left((-t)^{\rk(F_n)} Q_{F_n}(t^{-1})\right) u^n
=
\sum_{n=0}^{\infty} \left( \sum_{C \in \mathcal{C}(F_n)}
   (-1)^{\rk(F_n[C])} Q_{F_n[C]}(t) 
   \cdot 
   t^{|C|} \chi_{F_n/C}(t^{-1}) \right) u^n.
\end{align}
Since $\rk(F_n)=n$,
the left-hand side of \eqref{eq:genfun-sum} becomes
\begin{align*}
\sum_{n=0}^{\infty} Q_{F_n}(t^{-1})  (-tu)^n 
= \Psi(t^{-1}, -tu),
\end{align*}
and hence
\begin{align}\label{fan-pro3}
\Psi(t^{-1}, -tu)
= \sum_{n=0}^{\infty} 
    \left(
      \sum_{C \in \mathcal{C}(F_n)}
    (-1)^{\rk(F_n[C])} 
        Q_{F_n[C]}(t)  \cdot 
        t^{|C|} \chi_{F_n/C}(t^{-1})
    \right) u^n.
\end{align}
If the right-hand side of
\eqref{fan-pro3}
can be expressed in terms of
$\Psi(t,u)$,
then we obtain the desired
functional equation satisfied by
$\Psi(t,u)$.
Since the inverse Kazhdan--Lusztig polynomial of a matroid is uniquely determined by the recurrence relation and degree bound.
Once this equation is obtained, it suffices to check that the right-hand
side of \eqref{fan-pro2} satisfies it and has the required degree bound.

To this end,
we recall the description of the flats of the fan graph $F_n$
given in \cite{xie2018},
and summarize the combinatorial
constructions
used later in the proof.
A weak composition of  $n$ is a  sequence 
$(a_1,a_2,\ldots,a_k)$ of non-negative integers such that 
$a_1+a_2+\cdots+a_k=n$. 
Let $\mathcal{S}_n$
denote the
set of compositions with all parts strictly positive.  By convention, we set
$\mathcal{S}_0=\{()\}$.  
Let $\mathcal{E}_n$ denote the set of weak compositions of $n$ with  even number of parts, say $(a_1,\dots,a_{2k})$, such that $a_i \ge 1$ for $1<i<2k$. We note that $\mathcal{E}_0=\{(0,0)\}$.  
For each $\sigma=(a_1,\dots,a_{2k}) \in \mathcal{E}_n$, define
$$
\theta(\sigma) := \Bigl\{ (A_1,\dots,A_{2k}) \;\Big|\; 
A_{2i-1}=(a_{2i-1}) \text{~and~} A_{2i}\in \mathcal{S}_{a_{2i}} \text{for~} 1\le i\le k. \Bigr\}
$$
We then set
$$
\mathcal{C}'_n: = \bigcup_{\sigma \in \mathcal{E}_n} \theta(\sigma).
$$

Lu, Xie, and Yang \cite[Lemma~4.2]{xie2018} proved that for each $n\geq 1$, there exists a bijection
$$\phi :
\mathcal{C}(F_n) \longrightarrow \mathcal{C}'_n .$$
The same conclusion clearly holds for $n=0$.
We briefly recall the construction of $\phi$. 
The fan graph $F_n$ is drawn in the plane
such that vertex $0$ is adjacent to  the path
with vertices 
$1,2,\dots,n$
ordered from left to right.
Given a  composition $C \in \mathcal{C}(F_n)$, one considers the unique connected component
of the induced subgraph that contains $0$.
Removing the vertex $0$ from this component
produces a sequence of subpaths
$T_1,T_2,\ldots,T_k$,
ordered from left to right along the path
$1,2,\ldots,n$.
These subpaths partition the remaining
vertices into $k+1$ consecutive segments,
denoted by
$S_1,S_2,\ldots,S_{k+1}$.
Each segment $S_i$ is a forest,
that is, a disjoint union of paths,
and therefore determines a composition
$A_i$ of $|V(S_i)|$
given by the sizes of its connected components
in left-to-right order.
Note that $S_1$ and $S_{k+1}$
may be empty,
whereas $S_i$ is nonempty for
$2\le i\le k$.

For illustration,
consider the compositions
$C_1 = \{\{0,1,2,7\}, \{3,4\}, \{5\}, \{6\}, \{8\}\}$ and 
$C_2 = \{\{0,3,4,7,8\}, \{1,2\}, \{5\}, \{6\}\}$ of $F_8$, 
the corresponding  subpaths $T_i$
and segments $S_i$
are shown in
Figures~\ref{S1empty} and~\ref{S1nonempty}.

\begin{figure}[H]
\centering
\begin{minipage}[b]{0.48\textwidth}
\centering
\begin{tikzpicture}[line cap=round,line join=round,scale=0.7] 
\draw (0,0) node[circle,fill,inner sep=0pt,label=above:0] (0) {};

\foreach \s in {1,2,...,8}   
  \draw (\s-4.5,-3) node[circle,fill,inner sep=0.5pt,label=below:\s] (\s) {};

\draw[color=blue,line width=1.5pt] (-3.5,-3)--(-2.5,-3)--(0,0)--cycle;
\draw[color=blue,line width=1.5pt] (0,0)--(2.5,-3);
\draw[color=blue,line width=1.5pt] (-1.5,-3)--(-0.5,-3);

\foreach \s in {3,4,...,6}  \draw (0)--(\s);
\foreach \s in {8}          \draw (0)--(\s);

\foreach \s in {1,...,7} \draw (\s-4.5,-3)--(\s-3.5,-3);

\foreach \s in {1,2,...,8}   
  \draw (\s-4.5,-3.8) node[circle,fill,inner sep=0.5pt,label=below:] (\s) {};

\draw[color=blue,line width=1.5pt] (1)--(2);
\node[draw=none,minimum size=3mm,inner sep=0pt,yshift=-8pt,label=below:$S_1$]   at  (-4.5,-3.7)   {};
\node[draw=none,minimum size=3mm,inner sep=0pt,yshift=-8pt,label=below:$T_1$]   at  (-3,-3.7)   {};
\draw [decorate,decoration={brace,amplitude=5pt},xshift=-0.0cm,yshift=-4pt]
(1.5,-3.8) -- (-1.5,-3.8) node [black,midway,xshift=-0pt,yshift=-15pt]  {$S_2$};
\draw[color=blue,line width=1.5pt] (3)--(4);
\node[draw=none,minimum size=3mm,inner sep=0pt,yshift=-8pt,label=below:$T_2$]   at  (2.5,-3.7)   {};
\node[draw=none,minimum size=3mm,inner sep=0pt,yshift=-8pt,label=below:$S_3$]   at  (3.5,-3.7)   {};
\end{tikzpicture}
\captionsetup{font=footnotesize}
\caption{Constructions of $T_i's$ and  $S_i's$ for $C_1$.}
\label{S1empty}
\end{minipage}
\hfill
\begin{minipage}[b]{0.48\textwidth}
\centering
\begin{tikzpicture}[line cap=round,line join=round,scale=0.7]
\draw (0,0) node[circle,fill,inner sep=0pt,label=above:0] (0) {};

\foreach \s in {1,2,...,8}   
  \draw (\s-4.5,-3) node[circle,fill,inner sep=0.5pt,label=below:\s] (\s) {};

\draw[color=blue,line width=1.5pt] (-1.5,-3)--(-0.5,-3)--(0,0)--cycle;
\draw[color=blue,line width=1.5pt] (2.5,-3)--(3.5,-3)--(0,0)--cycle;
\draw[color=blue,line width=1.5pt] (-3.5,-3)--(-2.5,-3);

\foreach \s in {1,2}  \draw (0)--(\s);
\foreach \s in {5,6}          \draw (0)--(\s);

\foreach \s in {1,...,7} \draw (\s-4.5,-3)--(\s-3.5,-3);

\foreach \s in {1,2,...,8}   
  \draw (\s-4.5,-3.8) node[circle,fill,inner sep=0.5pt,label=below:] (\s) {};

\draw[color=blue,line width=1.5pt] (1)--(2);
\node[draw=none,minimum size=3mm,inner sep=0pt,yshift=-8pt,label=below:$S_1$]   at  (-3,-3.7)   {};
\draw[color=blue,line width=1.5pt] (3)--(4);
\node[draw=none,minimum size=3mm,inner sep=0pt,yshift=-8pt,label=below:$T_1$]   at  (-1,-3.7)   {};
\draw [decorate,decoration={brace,amplitude=5pt},xshift=-0.0cm,yshift=-4pt]
(1.5,-3.8) -- (0.5,-3.8) node [black,midway,xshift=-0pt,yshift=-15pt]  {$S_2$};
\draw[color=blue,line width=1.5pt] (7)--(8);
\node[draw=none,minimum size=3mm,inner sep=0pt,yshift=-8pt,label=below:$T_2$]   at  (3,-3.7)   {};
\node[draw=none,minimum size=3mm,inner sep=0pt,yshift=-8pt,label=below:$S_3$]   at  (4.5,-3.7)   {};
\end{tikzpicture}
\captionsetup{font=footnotesize}
\caption{Constructions of $T_i's$ and  $S_i's$ for $C_2$.}
\label{S1nonempty}
\end{minipage}
\end{figure}
With the above notation,
the bijection $\phi$ is defined by
\begin{align}\label{phiC}
\phi(C)
=
\begin{cases}
\bigl(
(|V(T_1)|),A_2,(|V(T_2)|),A_3,\dots,
A_k,(|V(T_k)|),A_{k+1}
\bigr),
& \text{if } S_1=\varnothing,\\[1ex]
\bigl(
(0),A_1,(|V(T_1)|),A_2,\dots,
A_k,(|V(T_k)|),A_{k+1}
\bigr),
& \text{if } S_1\neq\varnothing.
\end{cases}
\end{align}
Using this bijection,
we  show that for any
$C\in\mathcal{C}(F_n)$,
the summand
$$
(-1)^{\rk(F_n[C])} Q_{F_n[C]}(t)\cdot t^{|C|}\chi_{F_n/C}(t^{-1})
$$
can be evaluated by assigning a suitable weight
to the corresponding element
$\phi(C) \in \mathcal{C}'_n$. 

Given
$
A = (A_1, A_2, \dots, A_{2k-1}, A_{2k}) \in \mathcal{C}'_n
$ with $A_{2i-1} = (a_{2i-1})$ and 
$A_{2i} = (b_{i1}, \dots, b_{i\ell_i}) \in \mathcal{S}_{a_{2i}}$ for $1\leq i \leq k$,  
define  the weight of $A$ by
\begin{align}\label{eq-weight-function}
w(A)
  := \prod_{i=1}^{k} 
         (-1)^{\ell_i}    t^{\ell_i+1}Q_{F_{a_{2i-1}}}(t) \chi_{F_{\ell_i}}(t^{-1}).
\end{align}
We have the following result.

\begin{lemma}\label{wterm}
For any $C \in \mathcal{C}(F_n)$, 
\begin{align}
     (-1)^{\rk( F_n[C])}Q_{F_n[C]}(t)\cdot t^{|C|} \chi_{F_n/C}(t^{-1}) = (-1)^{n}w\left(\phi(C)\right).
\end{align}
\end{lemma}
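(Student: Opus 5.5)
The plan is to compute each of the three factors on the left-hand side directly from the structure of $C$ described before \eqref{phiC}, and then match them factor by factor with \eqref{eq-weight-function}. Write $\phi(C)=(A_1,\dots,A_{2k})$ with $A_{2i-1}=(a_{2i-1})$ and $A_{2i}=(b_{i1},\dots,b_{i\ell_i})$. Here $a_{2i-1}$ is the size of the $i$-th subpath attached to $0$, with $a_1=0$ allowed when $S_1\neq\varnothing$. The number $\ell_i$ counts the path components of the $i$-th segment, with $\ell_k=0$ allowed when the last segment is empty. I will use the conventions $F_0=$ a single vertex, $Q_{F_0}=1$ and $\chi_{F_0}(t)=t$. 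With these, the degenerate pairs $(0)$ and $()$ are handled uniformly with the others.

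\textbf{Step 1 (restriction).} The graph $F_n[C]$ is a disjoint union of the induced subgraphs on the parts of $C$.
\begin{itemize}
\item Every part not containing $0$ is a set of consecutive path vertices, so it induces a path, i.e.\ a tree. Its matroid is Boolean, and since $Q$ is multiplicative over direct sums with $Q_{U_{1,1}}=1$, its $Q$ equals $1$.
\item The part containing $0$ induces the graph obtained by gluing the fans $F_{a_1},F_{a_3},\dots,F_{a_{2k-1}}$ at the common hub $0$. Since $0$ is a cut vertex, the graphic matroid is the direct sum of the $\M(F_{a_{2i-1}})$.
\end{itemize}
Using multiplicativity of $Q$ over direct sums (disjoint unions and one-point gluings both give direct sums), I get $Q_{F_n[C]}(t)=\prod_i Q_{F_{a_{2i-1}}}(t)$. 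Also $\rk(F_n[C])=(n+1)-|C|$.

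\textbf{Step 2 (contraction).} This is the step needing the most care. In $F_n/C$ each part becomes a vertex, and the hub part becomes the hub. A path component of the segment $S_i$ is adjacent to the hub, because every path vertex is adjacent to $0$. Two such components are adjacent exactly when they are consecutive inside the same segment. Components in different segments are separated by some $T_j$, which is absorbed into the hub. Hence, after removing parallel edges (which changes neither the flats nor the chromatic polynomial), $F_n/C$ is the one-point gluing at the hub of $F_{\ell_1},\dots,F_{\ell_k}$. Gluing at a vertex gives $\chi_{G_1\cup_v G_2}=\chi_{G_1}\chi_{G_2}/t$, so
$$\chi_{F_n/C}(t)=t^{-(k-1)}\prod_{i=1}^k\chi_{F_{\ell_i}}(t).$$
I must check that empty segments (the $F_0$ factors with $\chi=t$) do not spoil this count. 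They do not, since each contributes a factor $t$ cancelled by one of the $t^{-1}$'s.

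\textbf{Step 3 (assembly).} Counting parts gives $|C|=1+\sum_i\ell_i$. The sign is then
$$(-1)^{n+1-|C|}=(-1)^n\prod_i(-1)^{\ell_i}.$$
The power of $t$ is
$$t^{|C|}\cdot t^{k-1}=t^{\sum_i(\ell_i+1)},$$
using $\chi_{F_n/C}(t^{-1})=t^{\,k-1}\prod_i\chi_{F_{\ell_i}}(t^{-1})$ from Step 2. Combining these with Step 1, the left-hand side equals
$$(-1)^n\prod_{i=1}^k(-1)^{\ell_i}\,t^{\ell_i+1}\,Q_{F_{a_{2i-1}}}(t)\,\chi_{F_{\ell_i}}(t^{-1})=(-1)^n w(\phi(C)),$$
as claimed. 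The main obstacle is the bookkeeping in Step 2: verifying adjacency in the contracted graph, and checking that the two boundary cases $S_1=\varnothing$ and $S_{k+1}=\varnothing$ are correctly encoded by the pairs $(0),A_1$ and $(a_{2k-1}),()$.
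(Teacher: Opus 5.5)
Your proposal is correct and follows essentially the same route as the paper: you get the rank and $Q_{F_n[C]}(t)=\prod_i Q_{F_{a_{2i-1}}}(t)$ from the one-point union of fans at $0$ plus disjoint paths, you identify $F_n/C$ (after removing parallel edges) with the one-point union of $F_{\ell_1},\dots,F_{\ell_k}$ at the hub to get $\chi_{F_n/C}(t)=t^{-(k-1)}\prod_i\chi_{F_{\ell_i}}(t)$, and you then assemble the signs and powers of $t$. Your vertex-gluing formula for $\chi$ and your explicit handling of the degenerate pieces via $F_0$ with $\chi_{F_0}(t)=t$ fill in details that the paper leaves implicit in its block formula \eqref{multi-chrpol}.
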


\begin{proof}
Fix $C \in \mathcal{C}(F_n)$. Write
$$
\phi(C) = (A_1, A_2, \ldots, A_{2k-1}, A_{2k}),
$$
where $A_{2i-1} = (a_{2i-1})$ and 
$A_{2i} = (b_{i1}, b_{i2}, \ldots, b_{i\ell_i}) \in \mathcal{S}_{a_{2i}}$ 
for $1 \le i \le k$. 
The induced subgraph $F_n[C]$ consists of a unique component 
containing the vertex $0$, obtained by identifying the vertex $0$ 
of the fan graphs $F_{a_1}, \dots, F_{a_{2k-1}}$, 
together with the disjoint union of
paths  $H_{b_{ij}}$ for $1 \le i \le k$ 
and $1 \le j \le \ell_i$.
It follows that 
\begin{align}\label{rkFnc}
    \rk(F_n[C]) = n+1 - |C|=n-(\ell_1 + \ell_2 + \cdots + \ell_k).
\end{align}
Since inverse Kazhdan--Lusztig polynomials
are multiplicative with respect to
direct sums of matroids, we obtain
\begin{align}\label{QFnc}
Q_{F_{n}[C]}(t)=\prod_{i=1}^{k} \left(Q_{F_{a_{2i-1}}}(t)  \prod_{j=1}^{\ell_i} Q_{H_{b_{ij}}}(t)\right)
=\prod_{i=1}^{k}Q_{F_{a_{2i-1}}}(t),
\end{align}
where the last equation comes from the fact that $Q_{H_b}(t)=1$ for any path $H_b$.

We now compute $\chi_{F_{n}/C}(t^{-1})$. 
Note that even though $G/C$ may have multiple edges, its chromatic polynomial agrees with that of its simplification. We therefore identify them when computing the chromatic polynomial. 
By contracting each block of $C$
to a single vertex,
the graph $F_n/C$ is isomorphic to the graph
obtained by identifying the distinguished
vertices of the fan graphs
$F_{\ell_1},\ldots,F_{\ell_k}$.
Recall that if  a graph $G$ contains $m$ biconnected components and $c(G)$ connected components,      then its chromatic polynomial
satisfies
\begin{align}\label{multi-chrpol}
\chi_G(t)=t^{-(m-c(G))}\prod_{i=1}^{m}\chi_{G_i}(t),
\end{align}
as shown in \cite[Lemma~2.1]{xie2018}.
Applying  \eqref{multi-chrpol} to $F_n/C$, we obtain
\begin{align}\label{chi-Fcon-C}
    \chi_{F_n/C}(t) = t^{-(k-1)}\prod_{i=1}^{k}{\chi_{F_{\ell_i}}(t)},
\end{align}  
and hence
$$
\chi_{F_n/C}(t^{-1}) = t^{k-1}\prod_{i=1}^{k}{\chi_{F_{\ell_i}}(t^{-1})}.
$$
Combining the above identities yields the desired result.
\end{proof}

Using the bijection $\phi: \mathcal{C}(F_n) \to \mathcal{C}_n'$ and Lemma~\ref{wterm}, equation~\eqref{fan-pro3} can be rewritten in terms of the weight function $w(A)$ as
$$
\Psi(t^{-1}, -tu) = \sum_{n=0}^{\infty} \left(\sum_{A \in \mathcal{C}_n'} w(A)\right) \, (-u)^n.
$$
Define
$$
\Phi(u) := \sum_{n=0}^{\infty} \left(\sum_{A \in \mathcal{C}_n'} w(A)\right) \, u^n.
$$
Then we have
\begin{align}\label{eq-key-gf}
\Psi(t^{-1}, -tu) = \Phi(-u).
\end{align}
To derive the functional equation satisfied by  $\Psi(t,u)$, 
it  suffices to express $\Phi(u)$ in terms of $\Psi(t,u)$ 
via generating function methods.

We adopt the combinatorial structures introduced by 
Lu, Xie, and Yang \cite{xie2018}, with a minor modification 
of the associated weights. 
Each element
$A = (A_1, A_2, \dots, A_{2k-1}, A_{2k}) \in \mathcal{C}_n'$
may be viewed as a combinatorial structure of type 
$\mathcal{A}$ on an interval of size $n$. 
For the purpose of generating functions, let 
$\mathcal{A}^o$ and $\mathcal{A}^e$ denote two types of structures, 
corresponding respectively to the components of $A$ 
of odd indices and of even indices.
Precisely, a type $\mathcal{A}^o$ structure on an interval of size $n$ 
corresponds to the weak composition $(n)$, 
with weight 
$$
w^o\left((n)\right): = t \, Q_{F_n}(t).
$$
A type $\mathcal{A}^e$ structure on an interval of size $n$ 
corresponds to a composition $(b_1, \dots, b_k) \in \mathcal{S}_n$, with weight 
$$
    w^e\left((b_1, \ldots, b_k)\right) := (-t)^k\chi_{F_k}(t^{-1}).
$$
Note that the unique type~$\mathcal{A}^o$ structure of size 0 is $(0)$,  with weight $t$, and the  unique type~$\mathcal{A}^e$ structure of size $0$ is the empty composition $(\,)$, 
 with weight $1/t$.

For each $n \ge 0$, let $\mathcal{A}^o_n$ (resp.~$\mathcal{A}^e_n$)  denote the set of all type~$\mathcal{A}^o$ (resp.~type~$\mathcal{A}^e$) structures  on an interval of size $n$. Define the corresponding generating functions by
\begin{align}
\Phi^o(u): &= \sum_{n=1}^{\infty} \left( \sum_{A^o \in \mathcal{A}^o_n} w^o(A^o) \right) u^n, \label{gf-def-1} \\
\Phi^e(u): &= \sum_{n=1}^{\infty} \left( \sum_{A^e \in \mathcal{A}^e_n} w^e(A^e) \right) u^n. \label{gf-def-2}
\end{align}
The following lemma provides explicit expressions 
for these generating functions.

\begin{lemma}\label{gen-o and e}
We have
\begin{align}
\Phi^o(u) &= t \left( \Psi(t,u) - 1 \right), \label{gf-1} \vspace{4mm}\\
\Phi^e(u) &= \frac{(1-t) u}{t(2 t u - 1)}. \label{gf-2}
\end{align}
\end{lemma}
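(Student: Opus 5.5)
The two identities are handled separately. The first is immediate from the definitions, and the second reduces to a geometric series once the chromatic polynomial of a fan graph is known.

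\emph{The identity \eqref{gf-1}.} For each $n\ge 1$ there is exactly one type~$\mathcal{A}^o$ structure on an interval of size $n$, namely $(n)$, and its weight is $t\,Q_{F_n}(t)$. Hence
$$\Phi^o(u)=t\sum_{n\ge1}Q_{F_n}(t)u^n.$$
Since $F_0$ has no edges, $Q_{F_0}(t)=1$. Comparing with the definition \eqref{fan-pro1} of $\Psi(t,u)$ then gives $\Phi^o(u)=t\,(\Psi(t,u)-1)$.

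\emph{The chromatic polynomial of $F_k$.} For $k\ge1$ I will use
$$\chi_{F_k}(x)=x(x-1)(x-2)^{k-1}.$$
This follows by colouring greedily: the apex $0$ has $x$ choices. Vertex $1$ has $x-1$ choices, since it is adjacent only to $0$ among the already coloured vertices. Each later path vertex $i\ge2$ is adjacent to $0$ and to $i-1$, which carry distinct colours, so it has $x-2$ choices.

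Substituting $x=t^{-1}$ and multiplying by $(-t)^k$ gives
$$(-t)^k\chi_{F_k}(t^{-1})=(-1)^k\,\frac{(1-t)(1-2t)^{k-1}}{t}.$$
This is a routine simplification: the total power of $t$ collected is $t^{k}\cdot t^{-1}\cdot t^{-1}\cdot t^{-(k-1)}=t^{-1}$.

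\emph{The identity \eqref{gf-2}.} The weight $w^e$ of a composition of $n$ depends only on its number of parts $k$. The number of compositions of $n\ge1$ into $k$ positive parts is $\binom{n-1}{k-1}$, and
$$\sum_{n\ge1}\binom{n-1}{k-1}u^n=\Bigl(\frac{u}{1-u}\Bigr)^k.$$
Put $v=u/(1-u)$. Then
$$\Phi^e(u)=\frac{1-t}{t}\sum_{k\ge1}(-v)^k(1-2t)^{k-1}=\frac{1-t}{t}\cdot\frac{-v}{1+(1-2t)v}.$$
Substituting back, $1+(1-2t)v=(1-2tu)/(1-u)$, so
$$\frac{-v}{1+(1-2t)v}=\frac{-u}{1-2tu}=\frac{u}{2tu-1},$$
which is \eqref{gf-2}.

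All sums are formal power series in $u$ and every series has zero constant term, so no convergence issues arise. I expect no step to be a real obstacle. The only point needing care is the sign and power-of-$t$ bookkeeping when evaluating $(-t)^k\chi_{F_k}(t^{-1})$, so I would check it on $k=1$: $\chi_{F_1}(x)=x(x-1)$ gives $-t\,\chi_{F_1}(t^{-1})=-(1-t)/t$, which agrees with the formula above.
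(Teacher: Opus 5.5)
Your proposal is correct and follows essentially the paper's route. The identity \eqref{gf-1} is read off from the definitions. For \eqref{gf-2}, the paper applies the composition formula for generating functions with inner series $\sum_{b\ge1}(-t)u^b=tu/(u-1)$ and outer series $\sum_{k\ge1}\chi_{F_k}(t^{-1})u^k$; your explicit count $\binom{n-1}{k-1}$ and your geometric sum in $v=u/(1-u)$ carry out exactly that composition by hand.
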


\begin{proof}
We first establish~\eqref{gf-1}.
By definition of $\Psi(t,u)$, 
$$
    \Phi^o(u) = \sum_{n=1}^{\infty} t  Q_{F_n}(t) \, u^n = t \bigl( \Psi(t,u) - 1 \bigr).
$$

We now prove~\eqref{gf-2}.
A composition $(b_1,\ldots,b_k)\in\mathcal{S}_n$
may be viewed as a sequence of $k$ nonempty
intervals whose lengths sum to $n$.
Assigning weight $-t$ to each interval and
weight $\chi_{F_k}(t^{-1})$ to the sequence,
we obtain
$$
\sum_{b_j=1}^{\infty} (-t)u^{b_j}
=
\frac{tu}{u-1}
\quad \text{and} \quad
\sum_{k=1}^{\infty}
\chi_{F_k}(t^{-1})u^k 
=
\frac{(1-t)u}{t(t-u+2tu)}.
$$
Applying the composition formula for
generating functions
\cite[Proposition~3.3]{xie2018},
we obtain
$$
    \Phi^{e}(u)
    = \frac{(1 - t) u}{t (2 t u - 1)}.
$$
This completes the proof.
\end{proof}

We continue to use the combinatorial
constructions introduced in
Lu, Xie, and Yang~\cite{xie2018}.
Let $\mathcal{A}^m$ denote the 
combinatorial structure consisting of
a finite alternating sequence  of
$\mathcal{A}^e$ and $\mathcal{A}^o$ structures.
Each such sequence begins with a
structure of type $\mathcal{A}^e$ and
ends with a structure of type
$\mathcal{A}^o$.
Moreover, both the $\mathcal{A}^e$  and
$\mathcal{A}^o$ structures are required
to be nonempty.
The weight function $w^m$ on type $\mathcal{A}^m$
structures  is defined as the product
of the weights of its components.
For each $n \ge 0$, let $\mathcal{A}^m_n$ denote the set of type $\mathcal{A}^m$ structures which can be built on an interval of size $n$, and define the generating
function
\begin{align*}
\Phi^m(u): &= \sum_{n=0}^{\infty} \left( \sum_{A^m \in \mathcal{A}^m_n} w^m(A^m) \right) u^n.
\end{align*}
By the composition formula of generating functions, one obtains
$$
\Phi^m(u) = \sum_{m=0}^{\infty}
\bigl(\Phi^e(u)\Phi^o(u)\bigr)^m
= \frac{1}{1-\Phi^e(u)\Phi^o(u)};
$$
see~\cite[Section~4]{xie2018}.

We are now ready to express $\Phi(u)$ in
terms of $\Psi(t,u)$.

\begin{lemma}\label{invkl-Phi}
We have
\begin{align}\label{eq-another}
\Phi(u)
=
\frac{\left(t+\Phi^o(u)\right)\left(t^{-1}+\Phi^e(u)\right)}{1-\Phi^e(u)\Phi^o(u)}.
\end{align}
\end{lemma}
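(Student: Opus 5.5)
The plan is to decompose each $A=(A_1,\dots,A_{2k})\in\mathcal{C}'_n$ according to which of its end components are empty, and to match the weight $w(A)$ with the product of the component weights $w^o$ and $w^e$. The first step is to check that the weight factors componentwise in the right way. For $A_{2i-1}=(a_{2i-1})$ we have $w^o=tQ_{F_{a_{2i-1}}}(t)$. For $A_{2i}$ with $\ell_i$ parts we have $w^e=(-t)^{\ell_i}\chi_{F_{\ell_i}}(t^{-1})$. Their product is $(-1)^{\ell_i}t^{\ell_i+1}Q_{F_{a_{2i-1}}}(t)\chi_{F_{\ell_i}}(t^{-1})$, which is exactly the $i$-th factor of \eqref{eq-weight-function}. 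Hence
\[
w(A)=\prod_{i=1}^{k}w^o(A_{2i-1})\,w^e(A_{2i}).
\]
This holds also when $a_1=0$ (weight $t$) or when $A_{2k}$ is empty (weight $1/t$). In the latter case $\ell_k=0$, so $\chi_{F_0}(t^{-1})=t^{-1}$ is consistent, because $F_0$ is a single vertex.

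Next I will read $A$ as a sequence $o,e,o,e,\dots,o,e$ of components. The interior components are nonempty: $A_{2i}$ is nonempty for $i<k$, and for $1<j<2k$ we have $a_j\ge1$, so $A_{2i-1}$ is nonempty for $i>1$. Only the first component $A_1$, of type $\mathcal{A}^o$, and the last component $A_{2k}$, of type $\mathcal{A}^e$, may be empty. I will therefore split $A$ as
\[
A_1 \;\big|\; (A_2,A_3,\dots,A_{2k-1}) \;\big|\; A_{2k}.
\]
The first piece is either the empty $\mathcal{A}^o$ structure, with weight $t$, or a nonempty one, with generating function $\Phi^o(u)$. This gives the factor $t+\Phi^o(u)$. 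The last piece likewise gives the factor $t^{-1}+\Phi^e(u)$. The middle piece is an alternating sequence of nonempty structures that begins with an $\mathcal{A}^e$ and ends with an $\mathcal{A}^o$, possibly with length zero when $k=1$. That is precisely an $\mathcal{A}^m$ structure, with generating function $1/(1-\Phi^e\Phi^o)$. Since sizes add and weights multiply, the composition (product) formula for generating functions, as in \cite[Section~4]{xie2018}, gives \eqref{eq-another}.

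The main obstacle is verifying that this decomposition is a bijection between $\bigcup_n\mathcal{C}'_n$ and the triples (possibly empty $\mathcal{A}^o$, $\mathcal{A}^m$, possibly empty $\mathcal{A}^e$). In particular I must check the boundary conventions.
\begin{itemize}
\item The case $n=0$, where $\mathcal{E}_0=\{(0,0)\}$, gives weight $t\cdot t^{-1}=1$. This matches the constant term of the right-hand side, since $\Phi^o$ and $\Phi^e$ have no constant term.
\item When $k=1$ and both $a_1=0$ and $a_2=0$ fail, there is no double counting.
\item The condition that $a_i\ge1$ for $1<i<2k$ is exactly the nonemptiness required of the $\mathcal{A}^m$ components.
\end{itemize}
Given these checks, the identity follows coefficientwise in $u$.
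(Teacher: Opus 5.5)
Your proposal is correct and follows the paper's proof. Both use the same unique three-part decomposition: a possibly empty $\mathcal{A}^o$ piece, then an $\mathcal{A}^m$ alternating sequence, then a possibly empty $\mathcal{A}^e$ piece, with weights multiplying. Your additional checks are details the paper leaves implicit: that $w(A)=\prod_i w^o(A_{2i-1})\,w^e(A_{2i})$, including the conventions $w^o((0))=t$ and $\chi_{F_0}(t^{-1})=t^{-1}$, and the $n=0$ case.
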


\begin{proof}
Every structure of type $\mathcal{A}$ 
admits a unique decomposition into
three consecutive parts.
The first part is 
of type $\mathcal{A}^o$, which may
be empty.
It is followed by a structure of type
$\mathcal{A}^m$.
The decomposition ends with a
structure of type $\mathcal{A}^e$,
which may also be empty.
The weight of the entire structure
is the product of the weights of
these three parts.
Translating this decomposition into
generating functions yields
\begin{align*}
\Phi(u)=\left(t+\Phi^o(u)\right) \times 
\Phi^m(u)
\times
\left(\frac{1}{t}+\Phi^e(u)\right).
\end{align*}
Substituting the expression for
$\Phi^m(u)$ gives
\begin{align*}
\Phi(u)=\left(t+\Phi^o(u)\right) \times \frac{1}{1-\Phi^e(u)\Phi^o(u)}
\times \left(\frac{1}{t}+\Phi^e(u)\right).
\end{align*}
This completes the proof.
\end{proof}

We are now in a position  to prove Theorem~\ref{thm-fan-pro}.

\begin{proof}[First proof of Theorem~\ref{thm-fan-pro}.]
By  \eqref{eq-key-gf} and \eqref{eq-another}, the generating function
$\Psi(t,u)$
satisfies the functional equation
\begin{align}\label{eq-gf-klpol-fan-funceqn}
\Psi(t^{-1}, -tu) = 
\frac{(t + \Phi^o(-u)) (t^{-1} + \Phi^e(-u))}{1 - \Phi^e(-u)\Phi^o(-u)}.
\end{align}
We first check that the right-hand side of \eqref{fan-pro2} satisfies the same identity.
This is purely algebraic
and can be verified directly.
For completeness,
we include the Mathematica code below.
{\renewcommand\baselinestretch{2.5}
\begin{mma}
\In \Psi[u\_]:=\frac{1-4u-\sqrt{1-4tu^{2}}}{2(-2+4u+tu)}+1;\\
\In \Phi^{o}[u\_]:=t(\Psi[u]-1);\\
\In \Phi^{e}[u\_]:=\frac{u(1-t)}{t(2tu-1)};\\
\In \Phi^{F}[u\_]:=\frac{(t+\Phi^{o}[-u])(t^{-1}+\Phi^{e}[-u])}{1-\Phi^{e}[-u]\Phi^{o}[-u]};\\
\end{mma}
}
\begin{mma}
\In |Simplify|[(\Psi[u]/.\{t\to t^{-1},u\to -t u\})==\Phi^F[u]]\\
\end{mma}
\begin{mma}
\Out  |True|\\
\end{mma}

We now verify
the required degree bound.
To this end, let $\widehat\Psi(t,u)$ denote the right-hand side of \eqref{fan-pro2}. 
Multiplying both the numerator and denominator by 
$1-4u+\sqrt{1-4tu^2}$ gives
\begin{align}\label{eq-psicap-mult}
\widehat\Psi(t,u)=1 + \frac{2u}{1-4u+\sqrt{1-4tu^2}}.
\end{align}
Let $\mathbf{A}[[t]]$ denote the ring of formal power series in  $t$  over a commutative ring  $\mathbf{A}$ with unity, and let $\mathbf{A}[[t,u]]:=\mathbf{A}[[t]][[u]]$  denote the ring of formal power series in  variables $t$ and $u$  over  $\mathbf{A}$.
A power series $a(t,u)\in \mathbf{A}[[t,u]]$ 
is invertible
if its constant term $a(0,0)$
is invertible in  $\mathbf{A}$; see \cite[Section~2.5]{kauers2011formal}.
Take $\mathbf{A}=\mathbb{Z}$.
Using the binomial expansion,
\begin{equation}\label{genhaoexp}
1-4u+\sqrt{1-4tu^2}
=
2-4u+\sum_{i=1}^{\infty} \binom{1/2}{i}(-4)^i t^i u^{2i}
=2-4u-2\sum_{i=1}^{\infty}C_{i-1}t^i u^{2i},
\end{equation}
where
$C_i=\frac{1}{i+1}\binom{2i}{i}$ denotes
the $i$-th Catalan number.
It follows that
\begin{align*}
  a(t,u):=  \frac{1-4u+\sqrt{1-4tu^2}}{2} \in \mathbb{Z}[[t,u]].
\end{align*}
Since its constant term
$a(0,0)=1$,
the series $a(t,u)$
is invertible in $\mathbb{Z}[[t,u]]$.
Thus
$\widehat\Psi(t,u) =1+2ua(t,u)^{-1}
\in \mathbb{Z}[[t,u]]$.

Define $$\widehat Q_{F_n}(t):=[u^n]\widehat\Psi(t,u) \in \mathbb{Z}[[t]].$$
Substituting $u=0$ into \eqref{eq-psicap-mult} gives
$\widehat Q_{F_0}(t)=1.$
We then obtain
\begin{align}\label{psicap}
\sum_{n=1}^{\infty}\widehat Q_{F_n}(t)  u^n
=\frac{2u}{1-4u+\sqrt{1-4tu^2}}.
\end{align}
Substituting \eqref{genhaoexp} into \eqref{psicap} yields
\begin{align}\label{psicap-2}
  \left(
  \sum_{n=1}^{\infty}\widehat Q_{F_n}(t)  u^n
  \right)  \left(
  1-2u-\sum_{i=1}^{\infty} C_{i-1} t^i u^{2i}
  \right)=u.
\end{align}
Comparing coefficients of $u$ and $u^2$ in \eqref{psicap-2} gives
$\widehat Q_{F_1}(t)=1$ and $\widehat Q_{F_2}(t)=2$. 
For $n\geq 3$,
a comparison of the coefficients of $u^n$ yields the recurrence relation
\begin{align}\label{degree-1}
    \widehat Q_{F_n}(t) = 2\widehat Q_{F_{n-1}}(t) + \sum_{i=1}^{\lfloor (n-1)/2 \rfloor}C_{i-1} t^i \widehat Q_{F_{n-2i}}(t).
\end{align}

We now prove the degree bound by induction on 
$n$.
The statement holds for $n=1,2$.
Assume that for all $2 < k < n$,
\begin{align*}
    \deg \widehat Q_{F_k}(t)\le \Big\lfloor\frac{k-1}{2}\Big\rfloor.
\end{align*}
In particular,
\begin{align*}
    \deg \widehat Q_{F_{n-1}}(t)\le \Big\lfloor\frac{n-2}{2}\Big\rfloor \le \Big\lfloor\frac{n-1}{2}\Big\rfloor.
\end{align*}
Moreover, for each $1\le i\le\lfloor (n-1)/2\rfloor$,
\begin{align*}
    \deg t^i \widehat Q_{F_{n-2i}}(t)\le i + \Big\lfloor\frac{n-2i-1}{2}\Big\rfloor \le \Big\lfloor\frac{n-1}{2}\Big\rfloor.
\end{align*} 
Hence every term on the right-hand side of \eqref{degree-1} has degree at most $\lfloor\frac{n-1}{2}\rfloor$. Therefore
\begin{align*}
    \deg \widehat Q_{F_n}(t)\le \Big\lfloor\frac{n-1}{2}\Big\rfloor.
\end{align*}
This immediately implies
\begin{equation}
\deg \widehat Q_{F_n}(t)<\frac{n}{2} \qquad \text{for~all~} n\geq 1.
\end{equation}
The proof is complete.
\end{proof}

\subsection{Inverse Kazhdan--Lusztig polynomials of fan matroids}\label{sec-thm1.2}

This subsection aims to determine the inverse Kazhdan--Lusztig polynomials of fan matroids using the generating function $\Psi(t,u)$ given in Theorem~\ref{thm-fan-pro}. To this end, we first provide the recurrence satisfied by $Q_{F_n}(t)$.

\begin{proposition}\label{prop-recu-fan}
The sequence $\{Q_{F_n}(t)\}_{n=0}^{\infty}$ satisfies the recurrence relation 
\begin{align*}
   4nt(t+4)Q_{F_{n}}(t) - 8ntQ_{F_{n+1}}(t) -  (n+3)(t+4)Q_{F_{n+2}}(t) + 2(n+3)Q_{F_{n+3}}(t) = 0
\end{align*}
for all $n\geq 0$, with initial conditions $Q_{F_{0}}(t)=1$, $Q_{F_{1}}(t)=1$,  and $Q_{F_{2}}(t)=2$.
\end{proposition}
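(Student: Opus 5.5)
The plan is to derive the recurrence from the closed form of $\Psi(t,u)$ in Theorem~\ref{thm-fan-pro}. We find a first-order linear ODE in $u$ with polynomial coefficients satisfied by $\Psi$, and then read off coefficients of $u^n$. Write $R=\sqrt{1-4tu^2}$ and use the form \eqref{eq-psicap-mult}, or equivalently
\[
\Psi-1=\frac{1-4u-R}{2D}, \qquad D=-2+(4+t)u .
\]

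\textbf{Step 1: an algebraic equation.} Solving for $R$ gives $R=1-4u-2D(\Psi-1)$. Squaring gives the quadratic relation
\[
\bigl(1-4u-2D(\Psi-1)\bigr)^2=1-4tu^2 .
\]
Thus $\Psi$ is algebraic of degree $2$ over $\mathbb{Q}(t,u)$.

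\textbf{Step 2: a differential equation.} Since $R'=-4tu/R$, we have $(1-4tu^2)R'+4tuR=0$. We substitute $R=1-4u-2D(\Psi-1)$ and $R'=-4-2(4+t)(\Psi-1)-2D\Psi'$. This yields a linear inhomogeneous ODE of the form
\[
-2D(1-4tu^2)\Psi' + p(u)\Psi + q(u)=0,
\]
where $p,q$ are polynomials in $u$ with coefficients in $\mathbb{Z}[t]$. Its leading coefficient has $u$-degree $3$. The target recurrence involves $Q_{F_n},\dots,Q_{F_{n+3}}$, and its coefficients are linear in $n$. This is exactly the shape produced by an ODE $a_3(u)\Psi'+a_2(u)\Psi+a_0(u)=0$ with $\deg_u a_3\le 3$ and $\deg_u a_2\le 3$.

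The ODE may need to be multiplied through by a constant, or combined with the algebraic relation to remove an inhomogeneous term. The target has no inhomogeneous part for $n\ge0$. Any polynomial $q(u)$ of low degree therefore only affects the first few coefficients, which are absorbed by the initial conditions.

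\textbf{Step 3: extract coefficients.} Take $[u^{n+2}]$ of the ODE, using $[u^m]\Psi'=(m+1)Q_{F_{m+1}}$ and $[u^m]u^j\Psi'=(m-j+1)Q_{F_{m-j+1}}$. Collecting terms should give
\[
4nt(t+4)Q_{F_n}-8ntQ_{F_{n+1}}-(n+3)(t+4)Q_{F_{n+2}}+2(n+3)Q_{F_{n+3}}=0 .
\]
The factors $(t+4)$ and $8t=2\cdot 4t$ visibly come from $D(1-4tu^2)$. For the low values of $n$, where polynomial terms of the ODE could interfere, we check the identity directly. For this we compute $Q_{F_3}$ and $Q_{F_4}$ from the recurrence \eqref{degree-1}, which gives $Q_{F_3}=4+t$ and $Q_{F_4}=8+4t$. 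At $n=0$ the identity reads $-3(t+4)\cdot2+6(4+t)=0$, which holds.

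The initial conditions $Q_{F_0}=1$, $Q_{F_1}=1$, $Q_{F_2}=2$ were already obtained in the proof of Theorem~\ref{thm-fan-pro}.

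\textbf{Expected obstacle.} The main obstacle is the bookkeeping in Step 2: obtaining exactly the right ODE, in particular confirming that the coefficient of $\Psi$ has the form making the $n$-dependence match $(n+3)$ and $n$ rather than other shifts. As a fallback, I would posit the ODE $(2-(t+4)u)(1-4tu^2)\Psi' + (\alpha+\beta u+\gamma u^2+\delta u^3)\Psi+\text{poly}=0$, with unknowns fixed by matching the claimed recurrence. I would then verify it symbolically by substituting the closed form \eqref{fan-pro2}, as the paper did with Mathematica.
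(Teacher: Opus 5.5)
Your proposal is correct and follows the same route as the paper: derive a first-order linear differential equation in $u$ from the closed form \eqref{fan-pro2}, then extract the coefficient of $u^{n+2}$. Carrying out your Step~2 via $(1-4tu^2)R'+4tuR=0$ and dividing by $2$ gives exactly the paper's equation \eqref{eq:pde-1}, namely $(2-(t+4)u)(1-4tu^2)\Psi_u+(8tu-t-4)\Psi-6tu+t+2=0$; since its inhomogeneous part has $u$-degree $1$, the extraction at $u^{n+2}$ yields the recurrence for every $n\ge 0$ with no exceptional small cases, so the hedging in your ``expected obstacle'' is unnecessary.
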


\begin{proof}
By Theorem~\ref{thm-fan-pro}, the generating function $\Psi(t,u) = \sum_{n=0}^{\infty}Q_{F_{n}}(t)u^{n}$  admits the
closed form \eqref{fan-pro2}. A direct computation shows that $\Psi(t,u)$ satisfies the  first-order linear differential
equation with respect to $u$
\begin{align}\label{eq:pde-1}
\big((4t^2 +16t)u^3-8tu^2-(t+4)u+2\big) \Psi_u(t,u)+(8tu-t-4)\Psi(t,u)-6tu+t+2=0.
\end{align}
Substituting the power series expansion of $\Psi(t,u)$ into \eqref{eq:pde-1} yields
\begin{align}\label{eq:pde-2}
    \big((4t^2 +16t)u^3-8tu^2
    &-(t+4)u+2\big)\sum_{n=0}^{\infty}(n+1)Q_{F_{n+1}}(t)u^{n}
\nonumber 
\\
   & \quad +(8tu-t-4)\sum_{n=0}^{\infty}Q_{F_{n}}(t)u^{n}-6tu+t+2=0.
\end{align}
Since $F_0$ is the single vertex graph of rank 0, we have $Q_{F_0}(t)=1$ by definition. 
Extracting  the coefficients of $u^0$ and $u^1$ from 
\eqref{eq:pde-2},  we obtain 
$$2Q_{F_1}(t)-(t+4)Q_{F_0}(t)+t+2=0,$$
$$-2(t+4)Q_{F_1}(t)+4Q_{F_2}(t)+8tQ_{F_0}(t)-6t=0.$$
Solving these equations gives $Q_{F_1}(t)=1$ and
$Q_{F_2}(t)=2$.
For $n \ge 0$, extracting the coefficient of $u^{n+2}$ from \eqref{eq:pde-2} yields precisely the stated recurrence relation.
This completes the proof.
\end{proof}

Now we  prove  Theorem \ref{thm-fan}.

\begin{proof}[Proof of Theorem \ref{thm-fan}]
Fix $n\geq 1$ and define
$
f_n(t):=\sum_{k=0}^{\infty}a_{n,k}t^{k}$,
where 
\begin{align} \label{ank-new}
a_{n,k}:=\frac{2^{n-2k-1}(n-2k)}{n}\binom{n}{k}
\end{align}
for $0 \le k \le \lfloor (n-1)/2\rfloor$, 
and $a_{n,k} = 0$ otherwise.  For completeness, we also define $a_{n,k} = 0$ whenever $k < 0$.
Let $c_{n,k}$ denote the coefficient of $t^k$ in $Q_{F_n}(t)=\sum_{k=0}^{\infty}c_{n,k}t^k$, with the same convention that $c_{n,k}=0$ if $k<0$ or $n<2k+1$.
To prove Theorem~\ref{thm-fan}, it suffices to show that  the arrays $\{a_{n,k}\}$ and $\{c_{n,k}\}$ satisfy the same recurrence relation  together with the same  initial conditions.
By direct computation for $n = 1, 2, 3$, one readily verifies that $f_n(t) = Q_{F_n}(t)$  in these cases.

From Proposition~\ref{prop-recu-fan}, comparing coefficients of $t^{k+2}$ yields the recurrence
\begin{align*}
4n c_{n,k} + 16n c_{n,k+1} &- 8n c_{n+1,k+1} - (n+3)c_{n+2,k+1} \vspace{3mm}\\
& \qquad
- 4(n+3)c_{n+2,k+2} + 2(n+3)c_{n+3,k+2} = 0,
\end{align*}
which can be rewritten as
\begin{align}\label{an,k-rec1}
4n\bigl(c_{n,k} + 4 c_{n,k+1} - 2 c_{n+1,k+1}\bigr)
    - (n+3)\bigl(c_{n+2,k+1} + 4 c_{n+2,k+2} - 2 c_{n+3,k+2}\bigr) = 0.
\end{align}
It remains to verify that the array $\{a_{n,k}\}$ satisfies the same recurrence. 
Define
\begin{align*}
R(n,k):=a_{n,k} + 4 a_{n,k+1} - 2 a_{n+1,k+1}.
\end{align*}
We suffice to to prove that
\begin{align}\label{Rnk-rec}
    4n R(n,k)-(n+3)R(n+2,k+1)=0,  \qquad
    \text{for~all~} n\geq 1 \text{ and } k\in \mathbb{Z}.
\end{align}
For this purpose, we first show that, for all $n\ge 1$ and $k\in \mathbb{Z}$,
\begin{align}\label{Rnk-new}
R(n,k)=
\begin{cases}
a_{2k+1,k}, & \text{if } k\ge 0 \text{ and } n=2k+1,\\
0, & \text{otherwise}.
\end{cases}
\end{align}
We prove \eqref{Rnk-new} by considering the following five cases.

\smallskip
\noindent
\textit{Case 1: $k<0$ and $n\ge 1$.}
If $k\le -2$, then both $k$ and $k+1$ are negative. By definition,
$a_{n,k}=a_{n,k+1}=a_{n+1,k+1}=0$,
which implies  $R(n,k)=0$.
For $k=-1$, we have
$R(n,-1)=4a_{n,0}-2a_{n+1,0}$. Substituting $k=0$ into \eqref{ank-new}, we obtain
$a_{n,0}=2^{n-1}$ for all $n\geq 1$.
Consequently, $R(n,-1)=4\cdot 2^{n-1}-2\cdot 2^n =0$. 
Hence,
$
R(n,k)=0$ for all $n\ge 1$ and  $k<0$.

\smallskip
\noindent
\textit{Case 2: $k\ge 0$ and $1\le n\le 2k$.}
Since 
$
n<2k+1$, we have $n<2(k+1)+1$  and $n+1<2(k+1)+1
$.
Thus,
$
a_{n,k}=a_{n,k+1}=a_{n+1,k+1}=0,
$
which gives $R(n,k)=0$.

\smallskip
\noindent
\textit{Case 3: $k\ge 0$ and $n=2k+1$.}
We have
$$
a_{n,k}=a_{2k+1,k}=\frac{1}{2k+1}\binom{2k+1}{k}\neq 0.
$$
Moreover,
$
n<2(k+1)+1$
and
$n+1<2(k+1)+1$,
which implies
$
a_{n,k+1}=a_{n+1,k+1}=0$.
Hence,
$
R(n,k)=a_{2k+1,k}$.

\smallskip
\noindent
\textit{Case 4: $k\ge 0$ and $n=2k+2$.}
By definition, we have
$a_{n,k}\neq 0$, $a_{n,k+1}=0$, and $a_{n+1,k+1}\neq 0$.
Thus
$
R(2k+2,k)=a_{2k+2,k}-2a_{2k+3,k+1}.
$
A direct computation gives
\begin{align*}
    R(2k+2,k)
&=\frac{2}{k+1}\binom{2k+2}{k}
-\frac{2}{2k+3}\binom{2k+3}{k+1}\\
&=\frac{2}{k+1}\binom{2k+2}{k}
-\frac{2}{2k+3}\cdot \frac{2k+3}{k+1}\binom{2k+2}{k}
=0.
\end{align*}

\smallskip
\noindent
\textit{Case 5: $k\ge 0$ and $n\ge 2k+3$.}
In this case, all three terms $a_{n,k}$, $a_{n,k+1}$, and $a_{n+1,k+1}$ are nonzero. Using
$$
\binom{n}{k}=\frac{k+1}{n+1}\binom{n+1}{k+1},
\qquad
\binom{n}{k+1}=\frac{n-k}{n+1}\binom{n+1}{k+1},
$$
we compute
\begin{align*}
a_{n,k}+4a_{n,k+1}
&=\frac{2^{n-2k-1}(n-2k)}{n}\binom{n}{k}
 +\frac{2^{n-2k-1}(n-2k-2)}{n}\binom{n}{k+1}\\
&=\frac{(n-2k)(k+1)2^{n-2k-1}+(n-2k-2)(n-k)2^{n-2k-1}}{n(n+1)}
   \binom{n+1}{k+1}\\
&=\frac{(n-2k-1)2^{\,n-2k-1}}{n+1}\binom{n+1}{k+1}\\
&=2a_{n+1,k+1}.
\end{align*}
Hence $R(n,k)=0$.

\smallskip

Combining the above five cases, we obtain \eqref{Rnk-new}. 

We now prove \eqref{Rnk-rec}.
By \eqref{Rnk-new}, the left-hand side of \eqref{Rnk-rec} is zero unless $n=2k+1$.  Therefore, it remains to establish that
\begin{align}\label{R-2k+1-k-new}
    4(2k+1)a_{2k+1,k}-(2k+4)a_{2k+3,k+1}=0, \qquad
    \text{for~all~} k\geq 0.
\end{align}
Indeed,
\begin{align*}
    4(2k+1)a_{2k+1,k}-(2k+4)a_{2k+3,k+1}
    &=4\binom{2k+1}{k}-\frac{2k+4}{2k+3}\binom{2k+3}{k+1}\\
    &=4\binom{2k+1}{k}-\frac{2k+4}{2k+3}\cdot \frac{2(2k+3)}{k+2}\binom{2k+1}{k}=0.
\end{align*}
This proves \eqref{R-2k+1-k-new}, and the proof is complete.
\end{proof}

\section{\texorpdfstring{Inverse $Z$-polynomials of fan matroids}{Inverse Z-polynomials of fan matroids}}\label{In_Z_Fan}


In this section, we compute the inverse $Z$-polynomials of fan matroids 
using generating functions, following the approach used for inverse 
Kazhdan--Lusztig polynomials.
Define
\begin{align*}
    \Psi_{Y}(t,u):=\sum_{n=0}^{\infty}Y_{F_{n}}(t)u^{n}.
\end{align*}
We first derive an explicit expression for $ \Psi_{Y}(t,u)$.

\begin{theorem}\label{thm-invz-gen}
We have 
\begin{align}\label{YFtu}
    \Psi_{Y}(t,u)=\frac{2(-1+u+tu)}{-3+4(1+t)u+\sqrt{1-4tu^{2}}}.
\end{align}
\end{theorem}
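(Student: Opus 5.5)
The plan is to mirror the proof of Theorem~\ref{thm-fan-pro}. The difference is that now the sum on the right-hand side of \eqref{invz-def} directly produces $\Psi_Y$; there is no functional equation to solve, because $Q$ is already known.

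\textbf{Step 1: per-composition weights.} Applying \eqref{invz-def} to $F_n$ and using $\rk(F_n)=n$, we get
\[
Y_{F_n}(t)=(-1)^n\sum_{C\in\mathcal{C}(F_n)}(-1)^{\rk(F_n[C])}Q_{F_n[C]}(t)\, t^{\rk(F_n/C)}\mu_{F_n/C}.
\]
We use the bijection $\phi$ and the decomposition in the proof of Lemma~\ref{wterm}.
\begin{itemize}
\item By \eqref{rkFnc}, $\rk(F_n[C])=n-\sum_i\ell_i$, and $Q_{F_n[C]}=\prod_i Q_{F_{a_{2i-1}}}$.
\item $F_n/C$ is the one-point join of $F_{\ell_1},\dots,F_{\ell_k}$, so $\rk(F_n/C)=\sum_i\ell_i$.
\item The Möbius invariant is multiplicative under direct sums. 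The one-point join is a direct sum of graphic matroids, so $\mu_{F_n/C}=\prod_i\mu_{F_{\ell_i}}$.
\end{itemize}
Hence the summand equals $(-1)^n$ times the weight
\[
w_Y(A)=\prod_{i=1}^k Q_{F_{a_{2i-1}}}(t)\,(-t)^{\ell_i}\mu_{F_{\ell_i}},
\]
and the two factors $(-1)^n$ cancel. The result is that $Y_{F_n}(t)=\sum_{A\in\mathcal{C}'_n}w_Y(A)$ with no sign twist in $u$, so $\Psi_Y(t,u)$ is just the $\mathcal{C}'$-generating function with these weights.

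\textbf{Step 2: the Möbius generating function.} We need $\mu_{F_\ell}$, which is the constant term of $\chi_{\M(F_\ell)}(t)=t^{-1}\chi_{F_\ell}(t)$ (up to the convention used for $\mu$; I will fix the sign so that $\mu$ equals the value of the reduced characteristic polynomial at $0$). Since $F_\ell$ is a path joined to an apex, $\chi_{F_\ell}(t)=t(t-1)(t-2)^{\ell-1}$. Therefore $\mu_{F_\ell}=-(-2)^{\ell-1}$ for $\ell\ge1$ and $\mu_{F_0}=1$. In the $\chi$-based derivation the empty $\mathcal{A}^e$ structure carried weight $1/t$; here it carries weight $1$, and the odd weight is simply $Q$, so the $t$-bookkeeping factors disappear.

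\textbf{Step 3: assembling $\Psi_Y$.} Set
\[
\Phi^o_Y(u)=\Psi(t,u)-1,\qquad \Phi^e_Y(u)=\sum_{n\ge1}\sum_{(b_1,\dots,b_\ell)\in\mathcal{S}_n}(-t)^{\ell}\mu_{F_\ell}\,u^n.
\]
By the composition formula \cite[Proposition~3.3]{xie2018}, with inner series $\sum_{b\ge1}u^b=u/(1-u)$ and outer series $\sum_{\ell\ge1}(-t)^\ell\mu_{F_\ell}x^\ell=tx/(1-2tx)$, we obtain
\[
\Phi^e_Y(u)=\frac{tu}{1-u-2tu}.
\]
Exactly as in Lemma~\ref{invkl-Phi},
\[
\Psi_Y(t,u)=\frac{\bigl(1+\Phi^o_Y\bigr)\bigl(1+\Phi^e_Y\bigr)}{1-\Phi^e_Y\Phi^o_Y}.
\]
Substituting $\Psi-1=2u/(1-4u+\sqrt{1-4tu^2})$ from \eqref{eq-psicap-mult} and simplifying should yield \eqref{YFtu}. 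This last step is a routine algebraic check, which could be done by Mathematica as before.

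\textbf{Expected obstacle.} The main risk is the sign and normalization bookkeeping: the convention for $\mu$, whether the factor $(-1)^{\rk(F_n[C])}$ combines with $(-1)^n$ correctly, and the weight of the empty even block. I would first sanity-check small cases: $Y_{F_0}=1$, and $Y_{F_1}=1+t$ for a single edge. The series expansion of \eqref{YFtu} gives $1+(1+t)u+\cdots$. I would then adjust conventions until they agree before doing the final simplification.
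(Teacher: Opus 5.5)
Your proposal is correct and follows the paper's proof essentially step for step: you use the same weights on $\mathcal{C}'_n$, the same $\tilde\Phi^o=\Psi-1$ and $\tilde\Phi^e=tu/(1-u-2tu)$, and the same identity $\Psi_Y=(1+\tilde\Phi^o)(1+\tilde\Phi^e)/(1-\tilde\Phi^e\tilde\Phi^o)$ from Lemma~\ref{Inv-Z-Phi}. Your $(-t)^{\ell}\mu_{F_\ell}$ equals the paper's $2^{\ell-1}t^{\ell}$ for $\ell\ge1$ and $1$ for $\ell=0$, so invoking $\mu_{F_0}=1$ and multiplicativity of $\mu$ simply absorbs the paper's two-case definition of $\tilde w$. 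The sign bookkeeping you were worried about is already right, and the final simplification goes through as written: after cross-multiplying, both sides reduce to $2-10u-4tu+8u^2+12tu^2+(2-2u-4tu)\sqrt{1-4tu^2}$.
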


The argument is parallel to that of \eqref{fan-pro2}.  We only state the
key lemma needed to pass to generating functions.
For any $A=(A_1,\dots,A_{2k})\in\mathcal{C}'_n$ with
$A_{2i-1}=(a_{2i-1})$ and
$A_{2i}=(b_{i1},\dots,b_{i\ell_i})\in\mathcal{S}_{a_{2i}}$
for $1\le i\le k$,
we define
\begin{equation*}
\tilde w(A):=
\begin{cases}
\displaystyle
\prod_{i=1}^{k}
\bigl(2^{\ell_i-1}t^{\ell_i}\bigr)\,Q_{F_{a_{2i-1}}}(t),
& \text{if} ~\ell_k\neq 0,
\\[8pt]
\displaystyle Q_{F_{a_{2k-1}}}(t)
\prod_{i=1}^{k-1}
\bigl(2^{\ell_i-1}t^{\ell_i}\bigr)\,Q_{F_{a_{2i-1}}}(t),
& \text{if~}\ell_k=0.
\end{cases}
\end{equation*}
As usual, we set the empty product equal to 1.

Parallel to Lemma \ref{wterm}, we  have the following result.

\begin{lemma}\label{invz-wterm}
For any $C \in \mathcal{C}(F_n)$, we have
\begin{equation}\label{invz-wei-new} (-1)^{\rk( F_n)} \cdot (-1)^{\rk( F_n[C])}Q_{F_n[C]}(t) \cdot t^{\rk( F_n/C)}\mu_{F_n/C} = \tilde{w}\left(\phi(C)\right). \end{equation}
\end{lemma}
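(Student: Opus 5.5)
The plan is to reuse the structural description of $F_n[C]$ and $F_n/C$ from the proof of Lemma~\ref{wterm}, replacing only the chromatic factor by the Möbius invariant. Fix $C\in\mathcal{C}(F_n)$ and write $\phi(C)=(A_1,\dots,A_{2k})$ with $A_{2i-1}=(a_{2i-1})$ and $A_{2i}=(b_{i1},\dots,b_{i\ell_i})$. Put $L:=\ell_1+\cdots+\ell_k$.

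First I would record which $\ell_i$ can vanish. Since $\phi(C)\in\mathcal{E}_n$-type data with $a_{2i}\ge 1$ for $1<2i<2k$, we have $\ell_i\ge1$ for every $i<k$; only $\ell_k$ can be $0$, which happens exactly when $S_{k+1}=\varnothing$. This accounts for the two cases in the definition of $\tilde w$.

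Next I would collect the sign and the $Q$-factor. The proof of Lemma~\ref{wterm} gives $\rk(F_n[C])=n-L$ by \eqref{rkFnc}, and
\[
Q_{F_n[C]}(t)=\prod_{i=1}^{k}Q_{F_{a_{2i-1}}}(t)
\]
by \eqref{QFnc}. Since rank is additive under restriction and contraction to a flat, $\rk(F_n/C)=\rk(F_n)-\rk(F_n[C])=L$. Hence the left-hand side of \eqref{invz-wei-new} equals
\[
(-1)^{L}\,t^{L}\,\mu_{F_n/C}\prod_{i=1}^{k}Q_{F_{a_{2i-1}}}(t),
\]
and it remains to show that $(-1)^L\mu_{F_n/C}=\prod_i 2^{\ell_i-1}$, where any factor with $\ell_k=0$ is replaced by $1$.

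For the Möbius invariant I would use $\mu_{\M}=\chi_{\M}(0)$, the constant term of the characteristic polynomial. This is the convention of Ferroni--Matherne--Stevens--Vecchi, and it is the step where I need to make sure the normalization and sign match the paper's definition. As in the proof of Lemma~\ref{wterm}, $F_n/C$ (after simplification, which does not change the lattice of flats) is the graph obtained by gluing the fans $F_{\ell_1},\dots,F_{\ell_k}$ at their apex. Combining \eqref{chim-chi} with \eqref{chi-Fcon-C} and $c(F_n/C)=1$ gives
\[
\chi_{\M(F_n/C)}(t)=\prod_{i=1}^{k}t^{-1}\chi_{F_{\ell_i}}(t).
\]
Equivalently, $\M(F_n/C)$ is the direct sum of the fan matroids $\M(F_{\ell_i})$.

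The last step is the evaluation. From $\chi_{F_\ell}(t)=t(t-1)(t-2)^{\ell-1}$ for $\ell\ge1$ we get
\[
t^{-1}\chi_{F_\ell}(0)\Big|_{\text{as a polynomial}}=(-1)(-2)^{\ell-1}=(-1)^{\ell}2^{\ell-1},
\]
while $F_0$ is a single vertex and contributes the factor $1$. Taking the product, $\mu_{F_n/C}=(-1)^L\prod_{\ell_i\ne0}2^{\ell_i-1}$. Substituting this gives exactly $\tilde w(\phi(C))$ in both cases $\ell_k\ne0$ and $\ell_k=0$, since the signs combine to $(-1)^{2L}=1$.

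The main obstacle is bookkeeping rather than any new idea. The points to check are the sign and normalization convention for $\mu$, and the degenerate cases $k=1$ and $\ell_k=0$, including $n=0$ and $C$ equal to the top flat. In each of these I would verify directly that the empty product convention yields $1$ on both sides.
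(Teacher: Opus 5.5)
Your proposal is correct and follows the paper's proof essentially step for step. Like the paper, you take the sign and the $Q$-factor from the proof of the earlier weight lemma, write $\chi_{\M(F_n/C)}(t)=t^{-k}\prod_{i}\chi_{F_{\ell_i}}(t)$, and read off $\mu_{F_n/C}$ as its constant term via $\chi_{F_\ell}(t)=t(t-1)(t-2)^{\ell-1}$ and $\chi_{F_0}(t)=t$, using the same convention for $\mu$ as the paper. Your observation that only $\ell_k$ can vanish is in fact stated more accurately than the paper's ``$\ell_i=0$ if and only if $i=k$''.
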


\begin{proof}
Let $C\in\mathcal{C}(F_n)$ and write $\phi(C)=(A_1,\dots,A_{2k})$.
Combining \eqref{chim-chi} with \eqref{chi-Fcon-C}, we obtain
\begin{equation}\label{invz-wterm-1}
\chi_{\M(F_n/C)}(t)
=t^{-1}\chi_{F_n/C}(t)
=t^{-k}\prod_{i=1}^{k}\chi_{F_{\ell_i}}(t).
\end{equation}
Moreover, 
\begin{equation}\label{invz-wterm-2}
\chi_{F_n}(t)=
\begin{cases}
t, & n=0,\\[4pt]
t(t-1)(t-2)^{\,n-1}, & n\ge 1.
\end{cases}
\end{equation}
Recall that, for any matroid, the M\"obius invariant is equal to the constant term of its characteristic polynomial; see the proof of \cite[Proposition~2.1]{gao2025inverse}. 
Since $\ell_i=0$ if and only if $i=k$, 
it follows from \eqref{invz-wterm-1} and \eqref{invz-wterm-2} that
\begin{equation*}
\mu_{F_n/C}=
\begin{cases}
\displaystyle
\prod_{i=1}^{k-1}(-1)^{\ell_i}2^{\ell_i-1},
& \ell_k=0,\\[8pt]
\displaystyle
\prod_{i=1}^{k}(-1)^{\ell_i}2^{\ell_i-1},
& \ell_k\neq 0.
\end{cases}
\end{equation*}
Using $\rk(F_n/C)=\ell_1+\cdots+\ell_k$ together with 
\eqref{rkFnc} and \eqref{QFnc}, we complete the proof.
\end{proof}

Define
$$
\tilde\Phi(u):=\sum_{n\ge0}\left(\sum_{A\in\mathcal{C}'_n}\tilde w(A)\right)u^n.
$$
By \eqref{invz-def} and Lemma~\ref{invz-wterm}, and  since
$\phi:\mathcal{C}(F_n)\to\mathcal{C}'_n$ is a bijection, we have 
$$
Y_{F_n}(t)=\sum_{C\in\mathcal{C}(F_n)}\tilde w(\phi(C))
=\sum_{A\in\mathcal{C}'_n}\tilde w(A)$$ 
for each $n\ge0$.
Multiplying by $u^n$ and
summing over $n\ge0$ yields
\begin{equation}\label{eq:PsiY=tildePhi}
\Psi_Y(t,u)=\tilde\Phi(u).
\end{equation}

As in the computation of the generating function for inverse Kazhdan--Lusztig polynomials, we decompose $\tilde\Phi(u)$ into its odd and even parts.
For each $n\ge 0$,
a type $\tilde{\mathcal A}^o$ structure of size $n$ is the weak
composition $(n)$, with weight
$\tilde w^o((n)):=Q_{F_n}(t)$.
For $n\ge 1$,
a type $\tilde{\mathcal A}^e$ structure of size $n$ is a composition
$(b_1,\dots,b_k)\in\mathcal{S}_n$, with weight
$\tilde w^e((b_1,\dots,b_k)):=2^{k-1}t^{k}$. 
Besides, the unique $\tilde{\mathcal A}^e$ structures of size $0$ is $()$, which is weighted by $1$.
Define
$$
\tilde\Phi^o(u):=\sum_{n\ge1}Q_{F_n}(t)u^n,
\qquad
\tilde\Phi^e(u):=\sum_{n\ge1}\left(\sum_{(b_1,\dots,b_k)\in\mathcal{S}_n}
2^{k-1}t^{k}\right)u^n.
$$
 
Parallel to Lemma~\ref{gen-o and e}, we have the following result.

\begin{lemma}\label{Inv-Z-gen-o and e}
We have
\begin{equation}\label{invz-gf-1}
\tilde\Phi^o(u)=\Psi(t,u)-1,
\end{equation}
\begin{equation}\label{invz-gf-2}
\tilde\Phi^e(u)=\frac{tu}{1-u-2tu}.
\end{equation}
\end{lemma}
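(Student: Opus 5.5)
The first identity \eqref{invz-gf-1} follows directly from the definitions. Since $\tilde w^o((n))=Q_{F_n}(t)$, we have $\tilde\Phi^o(u)=\sum_{n\ge1}Q_{F_n}(t)u^n$. By \eqref{fan-pro1} and $Q_{F_0}(t)=1$, this equals $\Psi(t,u)-1$. Nothing more is needed here.

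For \eqref{invz-gf-2} I will follow the proof of \eqref{gf-2} in Lemma~\ref{gen-o and e}. A composition $(b_1,\dots,b_k)\in\mathcal S_n$ is a sequence of $k\ge1$ nonempty intervals with total length $n$, and its weight $2^{k-1}t^k$ depends only on $k$. I will give each interval weight $1$, so the single-interval generating function is $\sum_{b\ge1}u^b=u/(1-u)$. Then I will sum over $k$ with the sequence weight $2^{k-1}t^k$, which is the composition formula \cite[Proposition~3.3]{xie2018}. This gives
\[
\tilde\Phi^e(u)=\sum_{k\ge1}2^{k-1}t^k\Bigl(\frac{u}{1-u}\Bigr)^k
=\frac{tu/(1-u)}{1-2tu/(1-u)}
=\frac{tu}{1-u-2tu}.
\]
As a check, the direct count gives the same series: there are $\binom{n-1}{k-1}$ compositions of $n$ with $k$ parts, so $\sum_{n,k}\binom{n-1}{k-1}2^{k-1}t^ku^n=\sum_{n\ge1}tu\,(u(1+2t))^{n-1}$, which sums to the same rational function.

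There is no real obstacle. The only point needing care is that the sums start at $n\ge1$, so the size-$0$ structures (weight $1$ for $\tilde{\mathcal A}^e$, weight $Q_{F_0}=1$ for $\tilde{\mathcal A}^o$) are excluded from $\tilde\Phi^e$ and $\tilde\Phi^o$. The geometric series is valid in the formal power series ring $\mathbb Z[t][[u]]$ because $2tu/(1-u)$ has zero constant term in $u$.
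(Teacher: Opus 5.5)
Your proposal is correct and follows the paper's proof essentially verbatim: the first identity comes straight from the definition of $\Psi(t,u)$ with $Q_{F_0}(t)=1$, and the second comes from summing $2^{k-1}t^k\bigl(u/(1-u)\bigr)^k$ over $k\ge1$. Your direct count via $\binom{n-1}{k-1}$ is a valid extra check but is not needed.
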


\begin{proof}
The identity \eqref{invz-gf-1}  follows directly from the definition of
$\Psi(t,u)$.
For \eqref{invz-gf-2}, a composition of length $k$ contributes weight
$2^{k-1}t^k$ and has generating function $(u/(1-u))^k$.  Summing over
$k\ge1$ gives
$$
\tilde\Phi^e(u)
=\sum_{k\ge1}2^{k-1}t^k\left(\frac{u}{1-u}\right)^k
=\frac{tu}{1-u-2tu},
$$
which completes the proof.
\end{proof}

Parallel to Lemma~\ref{invkl-Phi}, we have the following result.

\begin{lemma}\label{Inv-Z-Phi}
We have
\begin{equation}\label{invz-eq-another}
\Psi_{Y}(t,u)
=
\frac{(1+\tilde{\Phi}^o(u))(1+\tilde{\Phi}^e(u))}
     {1-\tilde{\Phi}^e(u)\tilde{\Phi}^o(u)}.
\end{equation}
\end{lemma}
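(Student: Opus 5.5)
We prove Lemma~\ref{Inv-Z-Phi} by the same three-part decomposition used for Lemma~\ref{invkl-Phi}, now with the weight $\tilde w$. By \eqref{eq:PsiY=tildePhi} it suffices to compute $\tilde\Phi(u)$.

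Take $A=(A_1,\dots,A_{2k})\in\mathcal{C}'_n$. We cut it into three consecutive pieces:
\begin{itemize}
\item an initial odd block $A_1=(a_1)$, possibly with $a_1=0$;
\item a middle alternating string $A_2,A_3,\dots,A_{2k-2},A_{2k-1}$ of pairs (even block, odd block), in which every block is nonempty by the definition of $\mathcal{E}_n$;
\item a final even block $A_{2k}$, possibly empty.
\end{itemize}
The decomposition is unique, and every triple of this shape arises, so summing over $n$ turns $\tilde\Phi(u)$ into a product of three generating functions. This works provided $\tilde w(A)$ factors as the product of the weights of the blocks. We assign
\begin{itemize}
\item weight $Q_{F_a}(t)$ to an odd block $(a)$;
\item weight $2^{\ell-1}t^{\ell}$ to a nonempty even block with $\ell$ parts;
\item weight $1$ to the empty even block $()$.
\end{itemize}

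The main point to check is that this matches the two cases in the definition of $\tilde w$. Each factor $Q_{F_{a_{2i-1}}}(t)$ belongs to the odd block $A_{2i-1}$. Each factor $2^{\ell_i-1}t^{\ell_i}$ belongs to the even block $A_{2i}$. When $\ell_k=0$, the last even block is empty and $\tilde w$ contributes no factor for it, which agrees with the assigned weight $1$ rather than the formal value $2^{-1}$.

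One caveat concerns the first odd block. In the case $a_1=0$ we have $Q_{F_0}(t)=1$, which is consistent with the weight of a size-$0$ odd block. Hence the first factor has generating function $1+\tilde\Phi^o(u)$.

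For the middle string, each (even, odd) pair with both blocks nonempty contributes $\tilde\Phi^e(u)\tilde\Phi^o(u)$. Sequences of $m\ge0$ such pairs therefore give
\[
\sum_{m\ge0}\bigl(\tilde\Phi^e(u)\tilde\Phi^o(u)\bigr)^m=\frac{1}{1-\tilde\Phi^e(u)\tilde\Phi^o(u)}.
\]
This is a well-defined formal power series, because $\tilde\Phi^e$ and $\tilde\Phi^o$ both have zero constant term in $u$.

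The final even block, empty or not, contributes $1+\tilde\Phi^e(u)$. Multiplying the three factors, again by the composition formula \cite[Proposition~3.3]{xie2018}, gives \eqref{invz-eq-another}.

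The obstacle I expect is the bookkeeping at the boundaries. We must confirm that the degenerate case $k=1$ with $A=((n),())$ is counted exactly once, by the product of the first and last factors with $m=0$. We must also confirm that $\mathcal{E}_0=\{(0,0)\}$ gives constant term $1=Y_{F_0}(t)$.
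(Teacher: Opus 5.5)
Your proposal is correct and follows the paper's proof. The paper likewise decomposes each element of $\mathcal{C}'_n$ into three pieces: a possibly empty initial odd block, a geometric series of nonempty (even, odd) pairs, and a possibly empty final even block, with $\tilde w$ factoring blockwise. Your explicit check that the $\ell_k=0$ case of $\tilde w$ gives weight $1$ to the empty final block (not the formal value $2^{-1}$), and that $Q_{F_0}(t)=1$ for the empty odd block, fills in exactly the details the paper's one-line argument leaves implicit.
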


\begin{proof}
By construction, $\Psi_Y(t,u)$ enumerates finite alternating
concatenations of an odd part and an even part, with multiplicative
weights.  Hence
\[
\Psi_Y(t,u)
=(1+\tilde\Phi^o(u))(1+\tilde\Phi^e(u))
\sum_{r\ge0}\bigl(\tilde\Phi^o(u)\tilde\Phi^e(u)\bigr)^r,
\]
which simplifies to \eqref{invz-eq-another}.
\end{proof}

We proceed to prove 
Theorem~\ref{thm-invz-gen}.

\begin{proof}[Proof of Theorem~\ref{thm-invz-gen}]
Substituting \eqref{invz-gf-1} and \eqref{invz-gf-2} into
\eqref{invz-eq-another}, and then using \eqref{fan-pro2} for $\Psi(t,u)$,
we obtain \eqref{YFtu}. This completes the proof.
\end{proof}


We are now ready to prove Theorem~\ref{thm-fan-invz}.
In contrast to the proof of Theorem~\ref{thm-fan},
we establish Theorem~\ref{thm-fan-invz}
by using the palindromicity of $Y_{F_n}(t)$ and the theory of formal power series,
rather than deriving a recurrence relation for $Y_{F_n}(t)$
from its generating function.

\begin{proof}[Proof of Theorem~\ref{thm-fan-invz}]
By \cite[Lemma~2.2]{gao2025inverse}, the polynomial $Y_{F_n}(t)$ is palindromic of degree $n$. It therefore admits a unique expansion in the basis
$\{t^j(1+t)^{n-2j}\}_{0\le j\le \lfloor n/2\rfloor}$, namely,
\begin{equation}\label{eq:pal-exp-Y}
Y_{F_n}(t)=\sum_{j=0}^{\lfloor n/2\rfloor} b_{n,j}\,t^j(1+t)^{n-2j}.
\end{equation}
Extracting the coefficient of $t^k$ from \eqref{eq:pal-exp-Y}, we obtain
\begin{align*}
    [t^k]Y_{F_n}(t)
    =\sum_{j=0}^{\min\{k,\lfloor n/2\rfloor\}} b_{n,j}\binom{n-2j}{k-j}=\sum_{j=0}^{\lfloor n/2\rfloor}b_{n,j}\binom{n-2j}{k-j},
\end{align*}
where the second equality holds since the binomial coefficient vanishes for $j>k$.
Hence,  \eqref{thm-fan-invz-equation} will follow
once the coefficients $b_{n,j}$ are determined. More precisely,
it suffices to show  that
\begin{align}\label{bnj-expr-ge1-new}
b_{n,j}=
\begin{cases}
2^{n-1}, & \text{if } j=0,\\[1mm]
\displaystyle
2^{n-2j-1}\sum_{r=1}^{j}\frac{(-1)^{r} r}{2j-r}\cdot\frac{2r+n-2j}{r+n-2j}
\binom{2j-r}{j}\binom{r+n-2j}{r},
& \text{if } 1\le j\le \lfloor n/2\rfloor,
\end{cases}
\end{align}
for all $n\ge 1$.

Multiplying both sides of \eqref{eq:pal-exp-Y} by $u^n$,
summing over all $n\ge 0$,
and applying Theorem~\ref{thm-invz-gen}, we obtain
\begin{align}\label{bnj-gen-ori}
    \sum_{n=0}^{\infty}\sum_{j=0}^{\lfloor n/2\rfloor} b_{n,j}\,t^j(1+t)^{n-2j}u^n=\frac{2(-1+u+tu)}{-3+4(1+t)u+\sqrt{1-4tu^{2}}}.
\end{align}
Now set
\begin{equation*}
w:=u(1+t),\qquad z:=\frac{t}{(1+t)^2}.
\end{equation*}
Then \eqref{bnj-gen-ori} becomes
\begin{align}\label{bnj-gen-new}
    \sum_{n=0}^{\infty}\sum_{j=0}^{\lfloor n/2\rfloor} b_{n,j}\,z^{j}w^{n} =\sum_{j=0}^{\infty}\sum_{n=2j}^{\infty} b_{n,j}\,w^{n}z^{j} =\frac{2(w-1)}{-3+4w+\sqrt{1-4zw^2}}.
\end{align}
Let
\begin{align*}
    C(x):=\sum_{i=0}^{\infty}C_i x^i=\frac{1-\sqrt{1-4x}}{2x}
\end{align*}
be the generating function for the Catalan numbers.
Then
\begin{align}\label{cata-gen-new}
    \sqrt{1-4zw^2}=1-2zw^2 C(zw^2).
\end{align}
Substituting \eqref{cata-gen-new} into \eqref{bnj-gen-new} yields
\begin{align}\label{bnj-fra-gem-new}
    \sum_{j=0}^{\infty}\sum_{n=2j}^{\infty} b_{n,j}\,w^{n}z^{j} = \frac{w-1}{2w-1-zw^2C(zw^2)}=\frac{1-w}{1-2w}\cdot \frac{1}{1-\frac{zw^2C(zw^2)}{2w-1}}.
\end{align}
Since 
\begin{align*}
\frac{zw^2C(zw^2)}{2w-1} \in \mathbb{R}[[w,z]]
\end{align*}
has vanishing constant term, the geometric series 
\begin{align}\label{gem-new}
    \sum_{r=0}^{\infty}\Big(\frac{zw^2C(zw^2)}{2w-1}\Big)^r=\frac{1}{1-\frac{zw^2C(zw^2)}{2w-1}}
\end{align}
is well-defined in $\mathbb{R}[[w,z]]$.  Substituting \eqref{gem-new} into \eqref{bnj-fra-gem-new} yields
\begin{align}\label{key-bnj-new}
\sum_{j=0}^{\infty}\sum_{n=2j}^{\infty} b_{n,j}\,w^{n}z^{j} =\frac{1-w}{1-2w}\sum_{r=0}^{\infty}\Big(\frac{zw^2C(zw^2)}{2w-1}\Big)^r.
\end{align}

Extracting the coefficients of $z^0$ from \eqref{key-bnj-new}, we obtain
\begin{align}\label{z0-coe-new}
    \sum_{n=0}^{\infty}b_{n,0}w^n=\frac{1-w}{1-2w}=(1-w)\sum_{n=0}^{\infty}2^n w^n.
\end{align}
It follows that
\begin{align}\label{bn0}
    b_{n,0}=2^{n-1}\qquad \text{for all } n\ge 1.
\end{align}
Now let $j\ge 1$. Extracting the coefficient of $z^j$ 
\eqref{key-bnj-new}  gives
\begin{align}\label{zj-1-new}
    \sum_{n=2j}^{\infty}b_{n,j}w^n =w^{2j}(w-1)\sum_{r=1}^{j}\frac{[x^{j-r}]C(x)^r}{(2w-1)^{r+1}}.
\end{align}
By~\cite[Lemma~2]{lang2000polynomials},
\begin{align}\label{cata}
    [x^k]C(x)^r = \frac{r}{2k+r}\binom{2k+r}{k},   \qquad \text{for all~} k\ge 0 \text{~and~} r\ge1.
\end{align}
Substituting \eqref{cata}  into \eqref{zj-1-new}, we obtain
\begin{align}\label{bnj-expand-new}
    \sum_{n=2j}^{\infty}b_{n,j}w^n= w^{2j}\sum_{r=1}^{j}\frac{r}{2j-r}\binom{2j-r}{j}\frac{w-1}{(2w-1)^{r+1}}.
\end{align}
Extracting coefficients of $w^n$ from \eqref{bnj-expand-new} yields
\begin{align}\label{bnj-simp-new}
    b_{n,j}=\sum_{r=1}^{j}\Big([w^{n-2j}]\frac{w-1}{(2w-1)^{r+1}}\Big)\cdot \frac{r}{2j-r}\binom{2j-r}{j}.
\end{align}

Since 
\begin{align*}
    \frac{1}{(2w-1)^{r+1}}=(-1)^{r+1}\sum_{i=0}^{\infty}\binom{-r-1}{i}(-2w)^i=(-1)^{r+1}\sum_{i=0}^{\infty}\binom{r+i}{i}2^i w^i,
\end{align*}
it follows that
\begin{align*}
[w^{i}]\frac{w-1}{(2w-1)^{r+1}}
=
\begin{cases}
(-1)^r, & i=0, \\[1.2ex]
(-1)^{r}2^{i-1}\dfrac{2r+i}{r+i}\dbinom{r+i}{r}, & i\ge 1.
\end{cases}
\end{align*}
Moreover, when  $r\ge 1$, the case $i=0$ is also covered by the formula
for  $i\ge 1$.
Hence,  for all $n\ge 2j$, 
\begin{align}\label{bnj-simp-new-2}
    [w^{n-2j}]\frac{w-1}{(2w-1)^{r+1}}=(-1)^{r}2^{n-2j-1}\frac{2r+n-2j}{r+n-2j}\binom{r+n-2j}{r}.
\end{align}
Substituting \eqref{bnj-simp-new-2} into \eqref{bnj-simp-new} yields
\begin{align*}
    b_{n,j}= 2^{n-2j-1}\sum_{r=1}^{j}\frac{(-1)^{r} r}{2j-r}\cdot\frac{2r+n-2j}{r+n-2j}
\binom{2j-r}{j}\binom{r+n-2j}{r},   \qquad \text{for } 1\le j\le \lfloor n/2\rfloor.
\end{align*}
Together  with \eqref{bn0}, we obtain \eqref{bnj-expr-ge1-new}. The proof is complete.
\end{proof}


%

\section{The deletion formula of Braden, Ferroni, Matherne, and Nepal}\label{dele-section}

In this section, we give an alternative proof of Theorems \ref{thm-fan-pro} and \ref{thm-invz-gen} using the deletion formulas for inverse Kazhdan--Lusztig polynomials and inverse $Z$-polynomials due to Braden, Ferroni, Matherne, and Nepal \cite{braden2025deletion}.

Throughout this section,
we view flats as subsets of $E(G)$,
rather than as the corresponding
partitions of $V(G)$.
For any subset $A \subseteq E(G)$, the closure of $A$ is defined by
\begin{align*}
\mathrm{cl}(A)
=
\{(x,y)\in E(G)
: x \text{ and } y
\text{ are connected  in } G[A]\}.
\end{align*} 
A subset $F \subseteq E(G)$
is a flat if $\mathrm{cl}(F) = F$.
Given a graph $G$,
let $\mathcal{L}(G)$ denote
the lattice of flats
of the graphic matroid $\M(G)$.
For an edge $e\in E(G)$, define
\begin{equation*}
 \mathcal{T}_e(G) := \left\{ F \in \mathcal{L}(G): e\in F \text{ and } F\smallsetminus\{e\}\notin \mathcal{L}(G)\right\}.
\end{equation*}
We now define
$\tau(G)$ by
\begin{align*}
 \tau(G) := 
 \begin{cases}
 [t^{(\rk(G)-1)/2}] P_{G}(t), & \text{if $\rk(G)$ is odd},\\[6pt]
 0, & \text{if $\rk(G) $ is even}.
 \end{cases}
\end{align*}

Recall that $F_n$ denotes the fan graph with vertex set
$\{0,1,\dots,n\}$ and edge set
\begin{align*}
    E(F_n)=\{(0,i):1\le i\le n\}\ \cup\ \{(i,i+1):1\le i\le n-1\}.
\end{align*} 
We begin with the following lemma.

\begin{lemma}\label{tau-not-0}
Let  $e=(n-1,n)\in E(F_n)$ and let $F\in\mathcal{T}_e(F_n)$. Then 
\begin{align*}
 \tau(F_n[F]/e)\neq 0
 \quad
\text{if~and~only~if}
 \quad
 F_n[F]=F_{n-m+1},
\end{align*}
for some $m\in\{1,\dots,n-1\}$ such that $n-m$ is odd.
\end{lemma}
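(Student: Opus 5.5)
We first classify the flats $F\in\mathcal{T}_e(F_n)$ with $e=(n-1,n)$. In terms of compositions, $F$ is a flat containing $e$, so $n-1$ and $n$ lie in the same block $B$ of the corresponding composition $C$. The condition $F\setminus\{e\}\notin\mathcal{L}(F_n)$ says that removing $e$ does not change the closure. Equivalently, $e$ is not a bridge of $F_n[B]$: it lies on a cycle inside the induced connected subgraph on $B$. Every cycle of $F_n$ passes through the hub $0$, so $0\in B$. The cycle through $e$ then uses $0$, $n-1$ and $n$, so $0,n-1,n$ all lie in $B$.

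Using the structure of $\phi$, $B$ consists of $0$ together with subpaths $T_1,\dots,T_k$ of the path $1,\dots,n$. The last subpath $T_k$ contains $n-1$ and $n$, so $T_k=\{m,m+1,\dots,n\}$ for some $m\le n-1$. Hence $F_n[F]$ is obtained by gluing at the hub the fans $F_{|T_1|},\dots,F_{|T_{k-1}|}$ and $F_{n-m+1}$, together with disjoint paths coming from the other blocks.

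Next we contract $e$. The component containing $e$ becomes the one-point union at vertex $0$ of $F_{|T_i|}$ for $i<k$ and $F_{n-m+1}/e$. Here $F_{n-m+1}/e$ simplifies to $F_{n-m}$, since contracting the last path edge merges $n-1$ and $n$ and the two spokes become parallel. The path components contribute nothing. Since $P$ is multiplicative over direct sums (one-point unions at a cut vertex give direct sums of matroids) and $P$ of a path is $1$, we obtain
\[
P_{F_n[F]/e}(t)=P_{F_{n-m}}(t)\prod_{i<k}P_{F_{|T_i|}}(t),
\]
with rank equal to $(n-m)+\sum_{i<k}|T_i|$.

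The key step is to evaluate $\tau$ of such a direct sum. For a matroid $M$ and $P_M$ with $\deg P_M<\rk(M)/2$, the coefficient of $t^{(\rk-1)/2}$ in a product is nonzero only when every factor attains its extremal degree $(\rk_i-1)/2$. That forces every factor to have odd rank. Two or more odd-rank factors cannot all attain this: their degrees sum to at most $\sum(\rk_i-1)/2<(\rk-1)/2$ once there are at least two factors. Indeed $\sum(\rk_i-1)/2=(\rk-s)/2$, where $s$ is the number of factors, and this is less than $(\rk-1)/2$ iff $s\ge2$. So $\tau$ vanishes unless there is exactly one nontrivial (positive rank) factor, namely $F_{n-m}$, and nothing else. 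Thus $k=1$, i.e.\ $F_n[F]=F_{n-m+1}$ up to isolated vertices and edgeless parts, which is how we interpret the equality. In that case $\tau(F_n[F]/e)=\tau(F_{n-m})$.

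Finally, we need $\tau(F_{n-m})\ne0$ iff $n-m$ is odd. The forward direction is by definition. For the converse we need the top coefficient $[t^{(r-1)/2}]P_{F_r}(t)$ to be nonzero for odd $r$, and we take this from the explicit formula of Lu, Xie, and Yang \cite{xie2018} for $P_{F_n}$. Since KL coefficients are nonnegative and a positive formula exists, this should be straightforward. The main obstacle is this positivity of the leading coefficient, together with carefully justifying the multiplicativity reduction for graph contractions (parallel edges, isolated vertices). I expect to handle the former by quoting \cite{xie2018}, which shows the leading coefficient is a positive Catalan-type number.
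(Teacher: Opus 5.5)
Your strategy is the same as the paper's: put $0,n-1,n$ in one block, write $F_n[F]$ as fans glued at $0$ plus the remaining blocks, kill every decomposable case by the degree bound $\deg P<\rk/2$, and get the converse from the value $\tau(F_{2j+1})=C_j$ computed by Lu, Xie, and Yang. Your description of $\mathcal{T}_e(F_n)$, via ``every cycle passes through $0$'' and the subpaths $T_1,\dots,T_k$, is cleaner than the paper's triangle-by-triangle propagation. However, there is a genuine error in how you treat the blocks of the composition that do not contain $0$.

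A block $\{i,i+1,\dots,i+b-1\}$ with $b\ge 2$ contributes a path with $b-1$ edges to $F_n[F]$. In $F_n[F]/e$ these edges are coloops, i.e.\ a Boolean direct summand of rank $b-1\ge 1$. Such a block contributes the factor $1$ to $P$, but it does \emph{not} contribute zero to the rank. So your formula $\rk(F_n[F]/e)=(n-m)+\sum_{i<k}|T_i|$ is wrong in general. Your direct-sum argument, applied only to the fan factors, then yields just $k=1$ and does not rule out such path blocks.

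For a concrete failure, take $n=6$ and the flat $F=\{(0,5),(0,6),(5,6),(1,2)\}$, which corresponds to the composition $\{\{0,5,6\},\{1,2\},\{3\},\{4\}\}$. It lies in $\mathcal{T}_e(F_6)$, has $k=1$, and has $n-m=1$ odd. Your computation therefore predicts $\tau=\tau(F_1)=1$. In fact $F_6[F]/e$ simplifies to the Boolean matroid of rank $2$, so $\tau(F_6[F]/e)=0$. Note that $F_6[F]$ is a triangle plus a disjoint edge, not $F_2$, so the lemma itself is not contradicted; only your reduction is.

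The repair stays within your framework. Count every path block with at least one edge as a further positive-rank summand. Then the vanishing of $\tau$ on direct sums of two or more positive-rank matroids forces both $k=1$ and the absence of such blocks, i.e.\ $F_n[F]$ is connected and equal to $F_{n-m+1}$. This is exactly the first step of the paper's proof, which disposes of disconnected $F_n[F]$ before analyzing the connected case.
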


\begin{proof}
We first show that
for any $F\in\mathcal{T}_e(F_n)$,
if $F_n[F]$ is disconnected,
then $\tau(F_n[F]/e)=0$.
Indeed, if $F_n[F]$ is disconnected,
then its graphic matroid
is a direct sum of nonempty matroids.
The same holds for $F_n[F]/e$.
By \cite[Lemma~2.7]{braden2020kazhdan},
we have $\tau(F_n[F]/e)=0$.
Thus we only need to consider
$F\in\mathcal{T}_e(F_n)$
for which $F_n[F]$ is connected.

By the definition of $\mathcal{T}_e(F_n)$,
$F$ is a flat
but $F\setminus\{e\}$ is not.
Combined with the monotonicity of the closure operator, 
this implies that
$e\in\mathrm{cl}(F\setminus\{e\})$.
Hence vertices $n-1$ and $n$
are connected by a path
in $F_n[F\setminus\{e\}]$.
Any such path avoiding $e$
must pass through the edge $(0,n)$,
so $(0,n)\in F$.
Since $F$ is a flat containing both $e$ and $(0,n)$, it follows that $(0,n-1)\in F$.
Otherwise $(0,n-1) \in \mathrm{cl}(F)$, contradicting that $F$ is a flat.
More generally,
for each $i\in\{2,\dots,n-1\}$,
consider the triangle with vertex set $\{0,i-1,i\}$ and edge set
$$
\{(0,i-1),(i-1,i),(0,i)\}.
$$
Assume that $(0,i)\in F$. If either $(0,i-1)$ or $(i-1,i)$ also belongs to $F$, then so does the other, by the same argument as above. In other words, whenever $(0,i)\in F$, the edges $(0,i-1)$ and $(i-1,i)$ either both belong to $F$ or both do not.
Since $\{(0,n-1),(n-1,n),(0,n)\} \subseteq F$, repeating this argument backwards along the fan graph, we see that, under the assumption that $F_n[F]$ is connected, the edges of $F_n[F]$ are arranged in consecutive fan blocks.
More precisely, there are only two possibilities for $F_n[F]$: either $F_n[F]$ is a fan graph $F_{n-m+1}$ for some $m\in\{1,\dots,n-1\}$, or $F_n[F]$ is obtained by identifying several fan graphs at $0$ and contains the edge set $\{(0,n-1),(n-1,n),(0,n)\}$.

To prove this lemma,
we now  suppose that $\tau(F_n[F]/e)\neq 0$. If $F_n[F]$ is formed
by identifying several fan graphs at $0$, then its graphic matroid
is again a direct sum of nonempty matroids. By \cite[Lemma~2.7]{braden2020kazhdan},
we have $\tau(F_n[F]/e)=0$. 
Therefore, $F_n[F]=F_{n-m+1}$
for some $m\in\{1,\dots,n-1\}$. Moreover, recall that $\tau(G)=0$ whenever $\M(G)$ has even rank.
Since $\rk(F_{n-m+1}/e)=n-m$, it follows that $m$ must be such that $n-m$ is odd.
Conversely, assume that
$$
F_n[F]=F_{n-m+1}
\qquad\text{and}\qquad
n-m=2j+1
$$
for some $0\le j\le\lfloor (n-2)/2\rfloor$.
Observe that the graph $F_{n-m+1}/e$
contains parallel edges, and $\operatorname{si}(F_{n-m+1}/e) \cong F_{n-m}$.
Since the Kazhdan--Lusztig polynomial is
also determined by the lattice of flats,
we have
$$P_{F_{n-m+1}/e}(t)=P_{F_{n-m}}(t)=P_{F_{2j+1}}(t).$$
In particular, $\tau(F_{n-m+1}/e)=\tau(F_{2j+1})$.
Hence, by \cite[Theorem~1.1]{xie2018},
\begin{equation}\label{flat-F-n-de-2} 
\tau(F_n[F]/e)=\tau(F_{2j+1})=\frac{1}{j+1}\binom{2j}{j}=C_j \neq 0. \end{equation}
This completes the proof.
\end{proof}

Now we present an alternative proof of Theorem~\ref{thm-fan-pro}.

\begin{proof}[Second proof of Theorem~\ref{thm-fan-pro}]
Let $\M(G)$ be a graphic matroid,
and let $e$ be a non-coloop element of $\M(G)$.
By \cite[Theorem~1.4]{braden2025deletion}, we have
\begin{equation}\label{dele-formula}
Q_{G}(t)
=
Q_{G\smallsetminus e}(t)
+
(1+t)Q_{G/e}(t)
-
\sum_{F\in\mathcal{T}_e(G)}
\tau(G[F]/e)
t^{\frac{\rk(F)}{2}}
Q_{G/F}(t).
\end{equation}
In a graphic matroid, an edge is a coloop if and only if it is a bridge.
Let $e=(n-1,n)\in E(F_n)$. Clearly, $e$ is not a coloop of $F_n$. 
We apply \eqref{dele-formula} with $G=F_n$ and this element $e$.

Observe that
$F_n\smallsetminus e \cong F_{n-1}\oplus B_1$,
since $(0,n)$ is a bridge (coloop) in $F_n\smallsetminus e$.
Moreover, $F_n/e$ contains parallel edges,
and $\operatorname{si}(F_n/e)\cong F_{n-1}$,
so $Q_{F_n/e}(t)=Q_{F_{n-1}}(t)$.
By multiplicativity of $Q_{\M}(t)$
under direct sums \cite[Lemma~3.1]{alice2020inverseKL}
and $Q_{B_1}(t)=1$, we obtain
$$Q_{F_n\smallsetminus e}(t)=Q_{F_{n-1}}(t).$$
Thus
\begin{equation*}
    Q_{F_n}(t)=(t+2)Q_{F_{n-1}}(t)- \sum_{F\in \mathcal{T}_e(F_n)} \tau(F_{n}[F]/e)t^{\frac{\rk(F)}{2}} Q_{F_{n}/F}(t).
\end{equation*}
By Lemma~\ref{tau-not-0},
nonzero contributions come exactly from
flats $F$ such that $F_n[F]=F_{n-m+1}$
for some $m\in\{1,\dots,n-1\}$
with $n-m$ odd.
Write $n-m=2j+1$,
where $0\le j\le\lfloor (n-2)/2\rfloor$.
Then $\rk(F)=2j+2$,
so $t^{\rk(F)/2}=t^{j+1}$,
and
\begin{equation}\label{flat-F-n-de}
\operatorname{si}(F_n/F)\cong F_{m-1}=F_{n-2j-2}.
\end{equation}
Contracting $e$ in $F_n[F]$ yields
(up to simplification) the fan matroid $F_{n-m}=F_{2j+1}$,
so
\[
\tau(F_n[F]/e)=\tau(F_{2j+1}),
\]
which is given by \eqref{flat-F-n-de-2}. Hence
\begin{equation}\label{iKL-de-1}
Q_{F_n}(t)
=
(t+2)Q_{F_{n-1}}(t)
-
\sum_{j=0}^{\lfloor (n-2)/2 \rfloor}
C_{j}t^{j+1} Q_{F_{n-2j-2}}(t).
\end{equation}

Multiply both sides by $u^{n}$
and sum over all $n\ge 2$. This gives
\begin{equation}\label{del-qn-rec}
    \Psi(t,u)-1-u=u(t+2)(\Psi(t,u)-1)-tu^{2}\Psi(t,u)\sum_{j=0}^{\infty}C_{j}t^{j}u^{2j}.
\end{equation}
Using the generating function
\begin{align}\label{gene-cata}
\sum_{j=0}^{\infty}C_{j}x^{j}=\frac{1-\sqrt{1-4x}}{2x},
\end{align}
equation \eqref{del-qn-rec} simplifies to
\begin{align*}
    \Psi(t,u)=\frac{2(1-(1+t)u)}{3-(4+2t)u-\sqrt{1-4tu^{2}}}.
\end{align*}
Multiplying the numerator and denominator by the conjugate of the denominator, we thus derive the desired formula.
\end{proof}

We also present an alternative proof of Theorem~\ref{thm-invz-gen},
analogous to that of Theorem~\ref{thm-fan-pro}.

\begin{proof}[Second proof of Theorem~\ref{thm-invz-gen}]

By \cite[Theorem~1.4]{braden2025deletion},
for any graphic matroid $\M(G)$
and any non-coloop element $e$ of $\M(G)$,
\begin{equation}\label{eq:Y-deletion}
Y_{G}(t)=Y_{G\smallsetminus e}(t)+(1+t)Y_{G/e}(t)
-\sum_{F\in\mathcal{T}_e(G)}
\tau(G[F]/e)t^{\frac{\rk(F)}{2}}Y_{G/F}(t).
\end{equation}
Let $e=(n-1,n)\in E(F_n)$.
We apply \eqref{eq:Y-deletion} with $G=F_n$ and this element $e$.
Recall from the proof of Theorem~\ref{thm-fan-pro} that
$F_n\smallsetminus e\cong F_{n-1}\oplus B_1$
and $\operatorname{si}(F_n/e)\cong F_{n-1}$.
Thus $Y_{F_n/e}(t)=Y_{F_{n-1}}(t)$.
By multiplicativity of $Y_{\M}(t)$ under direct sums
and $Y_{B_1}(t)=t+1$, we obtain
$$
Y_{F_n\smallsetminus e}(t)=(t+1)Y_{F_{n-1}}(t).
$$
Substituting these into \eqref{eq:Y-deletion} gives
\begin{equation}\label{eq:YFn-rec-raw}
Y_{F_n}(t)=2(t+1)Y_{F_{n-1}}(t)
-\sum_{F\in\mathcal{T}_e(F_n)}
\tau(F_n[F]/e)t^{\frac{\rk(F)}{2}}Y_{F_n/F}(t).
\end{equation}
From Lemma~\ref{tau-not-0},
nonzero contributions to the sum
come from flats $F$ such that $F_n[F]=F_{n-m+1}$, where 
$m\in\{1,\dots,n-1\}$ and $n-m$ is odd.
Substituting the corresponding results
\eqref{flat-F-n-de} for $\operatorname{si}(F_n/F)$
and \eqref{flat-F-n-de-2} for $\tau(F_n[F]/e)$
into \eqref{eq:YFn-rec-raw} yields
\begin{equation}\label{eq:YFn-rec}
Y_{F_n}(t)
=2(t+1)Y_{F_{n-1}}(t)
-\sum_{j=0}^{\lfloor (n-2)/2\rfloor} C_j\,t^{j+1}\,Y_{F_{n-2j-2}}(t),
\qquad \text{for}~n\ge 2.
\end{equation}

Multiply both sides of \eqref{eq:YFn-rec} by $u^n$
and sum over all $n\ge2$.
The left-hand side becomes
$\Psi_Y(t,u)-Y_{F_0}(t)-Y_{F_1}(t)u$.
Using $Y_{F_0}(t)=1$ and $Y_{F_1}(t)=t+1$,
we obtain
\begin{equation}\label{eq:PsiY-func}
\Psi_Y(t,u)-1-(t+1)u
=2u(t+1)\bigl(\Psi_Y(t,u)-1\bigr)
-tu^2\Psi_Y(t,u)\sum_{j\ge0} C_j(tu^2)^j .
\end{equation}
Applying the Catalan generating function \eqref{gene-cata},
equation \eqref{eq:PsiY-func} simplifies to the desired formula \eqref{YFtu}.
\end{proof}

\section{Log-concavity and real-rootedness}\label{sec-thm1.3}

This section is devoted to the proof of Theorem~\ref{thm-fan-log-concave}. 
The first approach proceeds by direct computation using the explicit formulas 
for the inverse Kazhdan--Lusztig polynomials of fan matroids, 
while the second approach is based on Newton's inequalities and multiplier sequences.

\begin{proof}[First proof of Theorem~\ref{thm-fan-log-concave}.]
Fix an integer $n \ge 1$.
For $0 \le k \le \lfloor (n-1)/2 \rfloor$, recall from
Theorem~\ref{thm-fan} that
\begin{align*} c_{n,k}=\frac{(n-2k)2^{n-2k-1}}{n}\binom{n}{k}.
\end{align*}
Clearly, $c_{n,k}>0$, i.e., the sequence $\{c_{n,k}\}_{k=0}^{\lfloor \frac{n-1}{2} \rfloor}$ has no internal zeros. To prove the log-concavity of this sequence, it suffices to prove that
\begin{align*}
    \quad c^{2}_{n,k} \ge c_{n,k-1}c_{n,k+1} \qquad \text{for~} 1 \le k \le \lfloor (n-1)/2 \rfloor-1.
\end{align*}
A straightforward computation yields
\begin{align*}
\frac{c^{2}_{n,k}}{c_{n,k-1}c_{n,k+1}}=\frac{(n-2k)^2}{(n-2k+2)(n-2k-2)} \cdot \frac{{\binom{n}{k}}^2}{\binom{n}{k+1}\binom{n}{k-1}}.
\end{align*}
For $1 \le k \le \lfloor (n-1)/2 \rfloor-1$, we have
$(n-2k+2)(n-2k-2)>0$ and $(n-2k)^{2}-(n-2k+2)(n-2k-2)=4>0$, which implies $$\frac{(n-2k)^2}{(n-2k+2)(n-2k-2)} \ge 1.$$
Moreover, it is well known that the sequence
$\{\binom{n}{k}\}_{k\ge0}$ is log-concave, that is,
\begin{align*}
{\binom{n}{k}}^2 \ge \binom{n}{k+1}\binom{n}{k-1}.
\end{align*}
 Combining these two inequalities, we conclude that
$$\frac{c^{2}_{n,k}}{c_{n,k-1}c_{n,k+1}}\ge 1,$$
which completes the proof.
\end{proof}

Let $\M$ be a matroid and write 
$Q_{\M}(t) = \sum_{i=0}^s c_i t^i$, where $s = \deg Q_{\M}(t)$. 
Define the normalization of $Q_{\M}(t)$ by
$$\mathscr{B}(Q_{\M}(t)):=\sum_{i=0}^s\binom{s}{i}c_it^i.$$
Equivalently, $\mathscr{B}(Q_{\M}(t))$ is the Hadamard product of 
$Q_{\M}(t)$ and $(1+t)^s$. 
Xie and Zhang \cite{XieZhang2025Logconcavity} conjectured that 
for every matroid $\M$, the polynomial $\mathscr{B}(Q_{\M}(t))$ 
is real-rooted.
If the conjecture holds, then by the Newton inequalities,
the coefficients of $\mathscr{B}(Q_{\M}(t))$  are ultra log-concave. This implies that
the coefficients of $Q_\M(t)$ are log-concave.

Recently, Braden, Ferroni, Matherne, and Nepal 
\cite{braden2025deletion} constructed a matroid of rank $19$ on $21$ elements 
that disproves Xie and Zhang's conjecture. 
We show that the conjecture holds for fan matroids.

\begin{theorem}\label{thm-fan-Blog}
For any positive integer $n$, the polynomial $\mathscr{B}(Q_{F_n}(t))$ has only real roots.
\end{theorem}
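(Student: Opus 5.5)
The plan is to show that $\mathscr{B}(Q_{F_n}(t))$ arises from a polynomial with only real, nonpositive roots by two real-rootedness preserving operations: a Hadamard product, then a first-order differential operator. By Theorem~\ref{thm-fan}, with $s=\lfloor (n-1)/2\rfloor$,
\begin{align*}
\mathscr{B}(Q_{F_n}(t))=\frac{2^{n-1}}{n}\sum_{k=0}^{s}(n-2k)\binom{s}{k}\binom{n}{k}\Bigl(\frac{t}{4}\Bigr)^{k}.
\end{align*}
The positive constant $2^{n-1}/n$ and the rescaling $y=t/4$ do not affect real-rootedness. It therefore suffices to show that $\sum_{k=0}^{s}(n-2k)\binom{s}{k}\binom{n}{k}y^k$ has only real roots.

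\textbf{Step 1.} Let $g(y):=\sum_{k=0}^{s}\binom{s}{k}\binom{n}{k}y^k$. This is the Hadamard product of $(1+y)^s$ and $(1+y)^n$. Both factors have only real roots, all of the same sign, so by the Mal\'o--Schur theorem $g$ has only real roots. Since its coefficients are positive, all roots are negative. Moreover $g$ has degree exactly $s$ and $g(0)=1\neq 0$.

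\textbf{Step 2.} The target polynomial is
\begin{align*}
h(y):=n\,g(y)-2y\,g'(y)=-2\,y^{1+n/2}\,\frac{d}{dy}\Bigl(y^{-n/2}g(y)\Bigr).
\end{align*}
Work on $(-\infty,0)$ with $G(y):=|y|^{-n/2}g(y)$, whose derivative has the same zeros there as $h$. Let the negative roots of $g$ be $r_1\le\cdots\le r_s$, counted with multiplicity.
\begin{itemize}
\item A root of $g$ of multiplicity $m$ is a root of $h$ of multiplicity at least $m-1$.
\item Rolle's theorem applied to $G$ gives a root of $h$ strictly between consecutive distinct roots of $g$.
\item Because $\deg g=s<n/2$, we have $G(y)\to 0$ as $y\to-\infty$. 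Since $G(r_1)=0$, Rolle's theorem on $(-\infty,r_1)$ gives one more root of $h$ to the left of $r_1$.
\end{itemize}
Together these account for at least $s$ real roots of $h$.

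\textbf{Step 3.} The leading coefficient of $h$ is $(n-2s)\binom{n}{s}>0$, so $\deg h=s$. Hence the $s$ real roots found in Step~2 are all of its roots, and $h$, and therefore $\mathscr{B}(Q_{F_n}(t))$, is real-rooted. The cases $n=1,2$, where $s=0$, are trivial.

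The main obstacle is Step~2, specifically the extra root to the left of $r_1$. This root exists only because the degree bound $s\le (n-1)/2<n/2$ makes $G$ vanish at $-\infty$. This is exactly where the positivity of the factor $n-2k$ on the support $0\le k\le s$ is used, and some care is needed with the multiplicity bookkeeping. If this becomes delicate, an alternative is to cite Laguerre's theorem: for $\deg g<n/2$, the linear multiplier $n-2k$ has its zero $k=n/2$ outside the range $0\le k\le s$, so it acts on $g$ as a real-rootedness preserver.
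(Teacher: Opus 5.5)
Your proof is correct, but it applies the two coefficientwise operations in the opposite order from the paper.

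The paper first absorbs the factor $n-2k$ into $(1+x)^n$, using the closed form $\sum_{k=0}^{n}(n-2k)\binom{n}{k}x^k=n(1-x)(1+x)^{n-1}$, which is visibly real-rooted. It then applies the multiplier sequence $\{\binom{s}{k}\}_{k\ge0}$, which is equivalent to Mal\'o's theorem for the Hadamard product with $(1+x)^s$. This step also truncates the sum to $k\le s$ automatically.

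You instead take the Hadamard product of $(1+y)^s$ and $(1+y)^n$ first. You then handle the linear multiplier $n-2k$ by a Rolle argument on $|y|^{-n/2}g(y)$, which is in effect a hands-on proof of the relevant case of Laguerre's theorem. Your multiplicity count is sound: $s-d$ roots from repeated roots of $g$, $d-1$ interlacing roots, and one root to the left of $r_1$, all located in $(-\infty,0)$. Your conclusion via $\deg h=s$ with positive leading coefficient $(n-2s)\binom{n}{s}$ is also correct.

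The trade-off is as follows. The paper's route is shorter and needs no analysis. It also does not use the degree bound $s<n/2$ at all; the same argument gives real-rootedness for any $s$. Your route needs $s<n/2$ essentially, to force $G\to0$ at $-\infty$ and produce the extra root. In return, your argument makes explicit how the inverse Kazhdan--Lusztig degree bound interacts with the sign of the factor $n-2k$. Both proofs ultimately rest on the same Mal\'o--Schur-type result for Hadamard products.
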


\begin{proof}
Fix an integer $n \ge 1$ and let $s=\lfloor (n-1)/2 \rfloor$. By Theorem \ref{thm-fan}, we have
\begin{align*}
    \mathscr{B}(Q_{F_{n}}(t))= \frac{2^{n-1}}{n}\sum_{k=0}^{s}(n-2k)\binom{s}{k}\binom{n}{k}\left(\frac{t}{4}\right)^{k}.
\end{align*}
Setting $x=t/4$, it suffices to show that the polynomial
\begin{align*}
    g_{n}(x):=\sum_{k=0}^{s}(n-2k)\binom{s}{k}\binom{n}{k}x^{k}
\end{align*}
is real-rooted. Recall that a sequence $\Gamma = \{\gamma_k\}_{k=0}^{\infty}$ of real numbers is called 
a multiplier sequence if, for every real polynomial
$
    f(t) = \sum_{k=0}^{n} a_k t^k
$
with only real zeros, the polynomial
\begin{equation*}
    \Gamma[f(t)] := \sum_{k=0}^{n} \gamma_k a_k t^k
\end{equation*}
also has only real zeros.  
Now,
\begin{align*}
    \sum_{k=0}^{n}(n-2k)\binom{n}{k}x^k=n(1-x)(1+x)^{n-1}
\end{align*}
has only real zeros. Moreover, ${\binom{s}{k}}_{k\ge 0}$ is a multiplier sequence
by \cite[Lemma~2.5]{zhang2018local}.
It follows that $g_n(x)$ is real-rooted. This completes the proof.
\end{proof}

\begin{proof}[Second proof of Theorem~\ref{thm-fan-log-concave}]
The assertion follows
from Theorem~\ref{thm-fan-Blog}
by Newton's inequalities
for real-rooted polynomials.
\end{proof}

\vspace{4mm}
\noindent
\textbf{Acknowledgements.} The first author is supported by the National Science Foundation for Post-doctoral Scientists
of China (No.2020M683544) and Guangdong Basic and Applied Basic Research Foundation (No.2024A1515011276).

\end{document}